\def\Xint#1{\mathchoice
{\XXint\displaystyle\textstyle{#1}}%
{\XXint\textstyle\scriptstyle{#1}}%
{\XXint\scriptstyle\scriptscriptstyle{#1}}%
{\XXint\scriptscriptstyle\scriptscriptstyle{#1}}%
\!\int}
\def\XXint#1#2#3{{\setbox0=\hbox{$#1{#2#3}{\int}$ }
\vcenter{\hbox{$#2#3$ }}\kern-.6\wd0}}
\def\dashint{\Xint-}
\renewcommand{\div}{\operatorname{div}}
{\newtheorem{thm}{Theorem}[section]}
{\newtheorem{prop}[thm]{Proposition}}
{}
{\newtheorem{lemme}[thm]{Lemma}}
{}
{\newtheorem{rem}{Remark}[section]}
\newcommand{\mO}{\mathcal{O}}
\newcommand{\mM}{\mathcal{M}}
\newcommand{\R}{\mathbb{R}}
\newcommand{\Z}{\mathbb{Z}}
\newcommand{\N}{\mathbb{N}}
\newcommand{\mW}{\mathcal{W}}
\newcommand{\pa}{{\partial}}
\newcommand{\na}{{\nabla}}
\newcommand{\eps}{{\varepsilon}}
\def\div{\hbox{div \!}}
\title{On the correction to Einstein's formula \\ for the effective viscosity}
\author{David G\'erard-Varet$^*$  \and Amina Mecherbet$^{\dagger* }$}
\begin{document}
\maketitle

\footnotetext[1]{Universit\'e de Paris, Institut de Math\'ematiques de Jussieu-Paris Rive Gauche (UMR 7586), F-75205, Paris, France
}
\footnotetext[2]{Sorbonne Universit\'es, Laboratoire Jacques-Louis Lions (UMR 7598), F-75005, Paris, France}
\footnotetext[0]{Email address: {david.gerard-varet@imj-prg.fr}, {mecherbet@ljll.math.upmc.fr}}
\begin{abstract}
This paper is a follow-up of article \cite{GVH}, on the derivation of accurate effective models for viscous dilute suspensions. The goal is to identify an effective Stokes equation providing a $o(\lambda^2)$ approximation of the exact fluid-particle system, with $\lambda$ the solid volume fraction of the particles. This means that we look for an improvement of Einstein's formula  for the effective viscosity in the form $\mu_{eff}(x) = \mu + \frac{5}{2} \mu  \rho(x) \lambda + \mu_2(x) \lambda^2$.
  Under a separation assumption on the particles, we proved in \cite{GVH} that {\em if a $o(\lambda)^2$ Stokes effective approximation exists}, the correction $\mu_2$ is necessarily given by a mean field limit, that can then be studied and computed under further assumptions on the particle configurations. Roughly, we go here from the conditional result of \cite{GVH} to an  unconditional result: we  show that such a  $o(\lambda^2)$ Stokes approximation indeed exists, as soon as the mean field limit exists.  This includes the case of periodic and random stationary particle configurations. 
\end{abstract}

\section{Introduction}
We consider a suspension of spherical particles, modelled by a collection of balls  $B_i = B(x_i, r_n)$, $1 \le i \le n$, all included in a fixed compact of $\R^3$.  The centers $x_i$ of the balls may (and will) depend on $n$, but we omit it in the notations. We consider a regime where $n$ is large, and $r_n \sim n^{-1/3}$.  More precisely, we assume for simplicity that  
$\lambda :=  n\frac{4}{3} \pi r_n^3$ is independent of $n$.  We also assume that the balls occupy a volume of size $1$, in the sense that 
\begin{equation} \label{A0} \tag{A0} 
\rho_n := \frac{1}{n}\underset{i=1}{\overset{n}{\sum}} \delta_{x_i} \rightarrow \rho(x) dx, \quad n \rightarrow +\infty
\end{equation}
where $\rho$ is a bounded density with support $\overline{\mO}$ for a smooth bounded domain $\mO$ such that $|\mO| =1$. In particular, $\lambda$ can be interpreted as the solid volume fraction. The suspension is immersed in a viscous fluid. We consider particles light enough so that neglecting inertia of the fluid and the particles is reasonable.  Setting  $\Omega_n:= \mathbb{R}^3 \setminus \bigcup_i B_i$, 
\begin{equation}\label{eq_N}
\left\{
\begin{array}{rcll}
- \div(2 \mu D(u_n)-\mathbb{I} p_n)&=&g_n,& \text{on $\Omega_n$}, \\
 \div(u_n)&=&0,& \text{on $\Omega_n$}, \\
u_n&=&u_i+\omega_i \times(x-x_i),& \text{on $B_i$},
\end{array}
\right.
\end{equation}
where $g_n \in L^2(\mathbb{R}^3) \cap L^{6/5}(\mathbb{R}^3)$ models some forcing. The constant vectors $u_i$ and $\omega_i$ are the translation velocity  and rotation vector of ball $B_i$. They are unknowns, associated to the newtonian dynamics of the particles: in the absence of inertia, relations are of the form   
\begin{eqnarray}\label{cond_N}
\int_{\partial B_i}\sigma(u_n,p_n)\nu = - \int_{B_i} g_n dx, & \displaystyle{\int_{\partial B_i}[\sigma(u_n,p_n) \nu]\times (x-x_i) = - \int_{B_i} g_n \times(x-x_i)dx },
\end{eqnarray}
where $\sigma(u,p) = 2 D(u) - p I$ is the newtonian stress tensor, and  $\nu$ is the unit normal vector pointing outward. These relations correspond to prescribing the force and the torque on each particle. One further assumes decay of $u_n$ at infinity, which will be encoded in the functional setting. 

\medskip
As $n \rightarrow +\infty$, one may expect that some averaging takes place. The hope is to replace the fluid-particle system above by a Stokes equation, with a viscosity coefficient $\mu_{eff} = \mu_{eff}(x)$, different from $\mu$ in the domain $\mO$, reflecting there the rigidity of the particles. Convergence to such a Stokes equation can indeed be shown through homogenization techniques, if one further assumes periodicity or stationarity assumptions on the distributions of balls: see \cite{DuerinckxGloria}, or \cite{book:ZKO} for the scalar case. Note that such homogenization results are valid for any $\lambda$, but somehow abstract, as the  expression of the effective viscosity involves a corrector equation which is not much simpler than the original system. They are moreover restricted to homogeneous distributions. 
In the present paper, we aim at more explicit formulas for the effective viscosity in the {\em dilute regime}, namely when $\lambda$ is small (but not vanishing as $n$ goes to infinity). We want to show that for $n$ large,  the solution $u_n$ of \eqref{eq_N} has for $o(\lambda^2)$ approximation the solution $\bar{u}$ of   
\begin{equation}\label{eq_eff}
\left\{
\begin{array}{rcll}
-\div ( 2  [\mu+\mu_1 \lambda +  \mu_2  \lambda^2]D(\bar{u})-\mathbb{I} \bar{p} )&=&g,& \text{on $\mathbb{R}^3$}, \\
 \div(\bar{u})&=&0,& \text{on $\mathbb{R}^3$}, 
\end{array}
\right.
\end{equation}
for appropriate  first and second order corrections $\mu_1 = \mu_1(x)$, $\mu_2=\mu_2(x)$. Clearly,   $\mu_1$ and $\mu_2$ should be non-zero only in the region $\mO$ where the suspension is located. Note also that,  if the distribution of the particles is anisotropic,  $\mu_1$ and $\mu_2$  are not expected to be scalar functions.  In full generality, we  look for $\mu_1, \mu_2$ in the set 
$$ \text{Sym}\left(\text{Sym}_{3,\sigma}(\mathbb{R})\right) := \{ M :  \text{Sym}_{3,\sigma}(\mathbb{R}) \rightarrow  \text{Sym}_{3,\sigma}(\mathbb{R}), \quad M^t = M  \} $$
of symmetric isomorphisms of the space of trace-free symmetric $3 \times 3$ matrices (denoted by $\text{Sym}_{3,\sigma}(\mathbb{R})$). 
This space can be identified with the space of four-tensors satisfying
$$ M = (M_{ijkl})_{1\le i,j,k,l \le 3}, \quad M_{ijkl} = M_{jikl} = M_{jilk} = M_{lkji}, $$
and the trace conditions 
$$ \sum_i M_{iikl} = 0 \: \text{for } \:  k\neq l, \quad \sum_i M_{ii11} =   \sum_i M_{ii22}= \sum_{i} M_{ii33}.  $$

\medskip
The search for the effective viscosity has a long history, starting from the work of Einstein \cite{Einstein}: he showed that if the suspension is homogeneous, and if the interaction between the particles can be neglected, a $o(\lambda)$ approximation is given by $\mu_{eff} = \mu + \frac{5}{2} \lambda \mu$. A rigorous derivation of this formula and of inhomogeneous extensions was later provided under more or less stringent separation assumptions on the particles. We refer to \cite{MR813656,MR813657,Haines&Mazzucato} for periodic distributions of balls, and to  \cite{HW,Niethammer&Schubert}, where the periodicity assumption is relaxed into a lower bound on the minimal distance: 
\begin{equation}\label{A1} \tag{A1}
d_n:= \underset{i \neq j}{\min}|x_i-x_j|\geq c n^{-1/3},
\end{equation}
See also the most recent paper \cite{GVRH}, where formula $\mu_1(x) = \frac{5}{2} \rho(x) \mu$  is established under mild requirements. Our concern in the present paper is related to a $o(\lambda^2)$ effective approximation, that is beyond Einstein's formula. Such second order effective viscosity has been discussed in several papers, see \cite{BG,ZuAdBr,AGKL}. However, one can observe discrepancies between the results, and very different approaches depending on the type of suspensions considered. A more global analysis was initiated by the first author and Matthieu Hillairet in the recent paper \cite{GVH}. Roughly, this paper shows that under \eqref{A1},  if $\limsup_n ||u_n - \overline{u}|| = o(\lambda^2)$, where $|| \, ||$ is a  weak norm and where $\overline{u}$ is  a solution of \eqref{eq_eff} with $\mu_1 = \frac{5}{2} \rho(x) \mu$,  then necessarily:  
\begin{equation} \label{meanfield1}
\nu_2 :=  \int_\mO \mu_2   = \frac{75}{16\pi}  \underset{n\to \infty}{\lim}  \left( \frac{1}{n^2}\underset{j \neq i}{\sum} \mM(x_i-x_j) - \int \int \mM(x-y) \rho(x) \rho(y) dx dy \right )
\end{equation}
where $\mM = \mM(x) \in  \text{Sym}\left(\text{Sym}_{3,\sigma}(\mathbb{R})\right)$ is given by 
\begin{equation} \label{meanfield2}
\begin{aligned}
\mM(x) S : S'  & = - D \left(\frac{x\otimes x : S}{|x|^5} x \right): S', \\
   & =  - 2  \frac{Sx \cdot S'x}{|x|^5} +  5 \frac{(S:x\otimes x)(S': x\otimes x)}{|x|^7}, \quad  \forall S, S' \in \text{Sym}_{3,\sigma}(\mathbb{R}).
 \end{aligned}
\end{equation}
Hence, {\em if a second order effective model exists}, the average $\nu_2$ of the second order correction over the domain (that coincides with $\mu_2$ if $\mu_2$ does not depend on $x$) is given by the mean field limit \eqref{meanfield1}-\eqref{meanfield2}.  The second part of article \cite{GVH} consists in an analysis of such mean field limit, using ideas introduced by S. Serfaty and her co-authors in the context of Coulomb gases \cite{MR3309890}. More explicit formula are provided, notably in the periodic case. 

\medskip
The limitation of the results in \cite{GVH} is that they hold {\em conditionally to the existence of a second order effective model of type \eqref{eq_eff}.} Ideally, one would like to prove the existence of an effective model as soon as the  mean field limit in \eqref{meanfield1}-\eqref{meanfield2} does exist. This necessary  condition is however not enough:  indeed $\nu_2$ corresponds to an average over the whole domain $\mO$, so that it is unlikely to guarantee the existence of an effective local coefficient $\mu_2 = \mu_2(x)$. Nevertheless, as we will show, we can exhibit more local in nature mean field limits, whose existence ensures the existence of an effective model.  Moreover, such limits allow to determine $\mu_2$, and not only its average $\nu_2$.  We  introduce 
\begin{equation} \label{def_mu2n}
\mu_{2,n}  := \frac{75\mu}{16\pi} \bigg(  \mM(x-y) 1_{x \neq y} \, \rho_n(dx) \, \rho_n(dy)    \: - \:   \mM(x-y) \rho(x) dx \rho(y) dy  \bigg)
\end{equation}
It can be seen as a compactly supported distribution on $\R^3_x \times \R^3_y$,  with values in the space $\text{Sym}\left(\text{Sym}_{3,\sigma}(\mathbb{R})\right)$:  for $F = F(x,y) \in C^\infty(\R^3 \times \R^3)$ (even for $F \in C^1(\R^3 \times \R^3)$), 
\begin{equation} \label{def_mu2n_bis} 
\langle \mu_{2,n} , F \rangle = \frac{75\mu}{16\pi}  \bigg( \frac{1}{n^2} \sum_{i \neq j} \mM(x_i-x_j) F(x_i,x_j) \: - \: \int_{\R^3} \int_{\R^3}\mM(x-y) F(x,y) \rho(x)  \rho(y) dy \bigg) . 
\end{equation}
We stress that  $\mM(x)$ is a Calderon-Zygmund kernel, hence not integrable. In particular, the last integral must be understood in a weak sense: 
it can be defined  rigorously  through  the decomposition 
\begin{align*}
\int_{\R^3} \int_{\R^3}\mM(x-y) F(x,y) \rho(x) \rho(y) dx dy := & \int_{\R^3} \int_{\R^3} \mM(x-y)  [F(x,y) - F(y,y)] \rho(x) \rho(y) dx dy \\
 + & \int_{\R^3}   ( \mM \star \rho)(y) F(y,y)   \rho(y) dy 
\end{align*}
where the first integral in the decomposition exists in the usual sense, while the second one is defined because $h \rightarrow \mM \star h$ is continuous  from $\displaystyle L^p(\R^3)$ to $L^p(\R^3)$ for any $1 < p < \infty$ by Calderon-Zygmund theorem. Of course, when $F$ is of the form $F(x,y) = f(x) g(y)$, one can write  directly   
$$\int_{\R^3} \int_{\R^3}\mM(x-y)F(x,y)  \rho(x) \rho(y) dx dy = \int_{\R^3} (\mM \star (\rho f) ) \,  \rho g $$
which  allows to give a meaning to  $\langle \mu_{2,n} , f \otimes g \rangle$ for much less regular $f$ and $g$.  Our theorem reads as follows.  
\begin{thm}\label{thm}
Let $\lambda > 0$, $g \in L^{3+\eps}$, $\eps > 0$,  $\mu_2 \in L^\infty\left(\R^3, \text{Sym}\left(\text{Sym}_{3,\sigma}(\mathbb{R})\right)\right)$. For all $n$, let $r_n$ such that $\displaystyle \lambda = \frac{4\pi}{3} n r_n^3$, $g_n \in  L^{\frac65}(\R^3)$. Let $u_{n,\lambda}$ the solution of \eqref{eq_N}-\eqref{cond_N} in  $\displaystyle \dot{H}^1(\mathbb{R}^3) \cap L^6(\mathbb{R}^3)$. Assume \eqref{A0}-\eqref{A1}, that  $g_n \rightarrow g$ in  $L^{\frac65}(\R^3)$, and that 
\begin{equation}\label{A2} \tag{A2}
\mu_{2,n} \rightarrow  \mu_2(x) \delta_{x=y} \quad \text{ in } \: \mathcal{D'}\left(\R^3 \times \R^3, \text{Sym}\left(\text{Sym}_{3,\sigma}(\mathbb{R})\right)\right)
\end{equation}
with $\mu_{2,n}$ defined in \eqref{def_mu2n}. Then  any accumulation point $u_\lambda$ of $u_{n,\lambda}$ solves
\begin{equation} \label{system_R_lambda}
\left\{
\begin{array}{rcll}
-\div ( 2[\mu+ \frac{5}{2} \mu \rho \lambda + \mu_2 \lambda^2]D({u_\lambda})-\mathbb{I} {p}_\lambda )&=&g+R_\lambda,& \text{in $\mathbb{R}^3$}, \\
 \div({u}_\lambda)&=&0,& \text{in $\mathbb{R}^3$}, 
\end{array}
\right.
\end{equation}
where $R_\lambda$ satisfies for all $q \ge 3$
\begin{equation}\label{estimate_R_lambda}
\left|\langle R_\lambda, \phi \rangle \right| \leq C \lambda^{\frac73} \|D \phi\|_{q}, \quad \forall \phi \in \dot{H}^1(\mathbb{R}^3)\cap \dot{W}^{1,q}(\mathbb{R}^3).
\end{equation}
\end{thm}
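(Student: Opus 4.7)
The strategy is to test the weak formulation of \eqref{eq_N}-\eqref{cond_N} against divergence-free test fields $\phi\in\dot H^1\cap\dot W^{1,q}(\R^3)$ with $q\ge 3$, and to construct a near-solution for \eqref{system_R_lambda} (without $R_\lambda$) by a two-step method of reflections around each particle. Fixing a subsequential limit $u_\lambda$ and denoting by $\bar u_\lambda$ the unique solution of the effective system in \eqref{system_R_lambda} with vanishing remainder, I would define for each ball $B_i$ a Stokes stresslet correction $v_i$ whose far field generates the strain $r_n^3\mM(\cdot-x_i)\,D\bar u_\lambda(x_i)$, chosen so that $\bar u_\lambda+\sum_i v_i$ approximates a rigid motion on each $B_i$ up to the order at which the next reflection acts. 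This ansatz is the natural refinement of the one used in \cite{GVRH} to justify Einstein's law under \eqref{A1}.

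\textbf{Identification of the $\lambda^2$ term.} Inserting this ansatz into the duality identity obtained by integration by parts against $\phi$ produces three blocks. The Einstein block of order $\lambda$ recovers $\int\tfrac{5}{2}\mu\rho\lambda\,D\bar u_\lambda:D\phi$ by the classical stresslet-strain computation. The new block of order $\lambda^2$ couples pairs of distinct particles, since $D\phi$ must be evaluated against the stresslet emitted by each $B_j$ and summed; the algebra of the symmetrized Oseen tensor reproduces the kernel $\mM$ of \eqref{meanfield2} and yields exactly
\[
\frac{75\mu\lambda^2}{16\pi}\cdot\frac{1}{n^2}\sum_{i\ne j}\mM(x_i-x_j)\,D\bar u_\lambda(x_i):D\phi(x_j).
\]
Adding and subtracting the Calder\'on-Zygmund convolution $\int\!\int\mM(x-y)F(x,y)\rho(x)\rho(y)\,dx\,dy$ with $F(x,y):=D\bar u_\lambda(x):D\phi(y)$, the resulting difference is exactly $\lambda^2\langle\mu_{2,n},F\rangle$, while the subtracted mean-field integral is reabsorbed into a renormalization of the Einstein block, consistently with the conditional analysis of \cite{GVH}. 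Assumption \eqref{A2} applied to the smooth compactly supported $F$ then identifies the limit as $\lambda^2\int\mu_2\,D\bar u_\lambda:D\phi$, which is the second-order viscous contribution expected from \eqref{system_R_lambda}.

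\textbf{Remainder estimate and main obstacle.} The hard part is to bound the residual terms uniformly in $n$ by $\lambda^{7/3}\|D\phi\|_q$. Two sources are delicate. (i) Further reflections generate three-body sums of the type $\tfrac{1}{n^3}\sum_{i,j,k\text{ distinct}}\mM(x_i-x_j)\,\mM(x_j-x_k)$, which are controlled on $L^q$ for $1<q<\infty$ after one Calder\'on-Zygmund convolution; using \eqref{A1} to count dyadic shells around each center yields a factor $\lambda^{1/3}\sim r_n n^{1/3}$ gained over the naive $O(\lambda^2)$ bound, which is exactly the source of the $\lambda^{7/3}$ exponent. (ii) Boundary-layer errors near each $\partial B_i$, between $v_i$ and the exact single-particle Stokes response, must be treated using the regularity of $\bar u_\lambda$ coming from $g\in L^{3+\eps}$, together with the Sobolev embedding $\dot W^{1,q}\hookrightarrow L^\infty_{\mathrm{loc}}$ available for $q\ge 3$, which is precisely why the theorem restricts $q$ that way and is what allows the pointwise evaluations of $D\bar u_\lambda$ and $D\phi$ at the $x_i$ to be absorbed. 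The main obstacle is reconciling the borderline Calder\'on-Zygmund behaviour of $\mM$ with the summation over $i\ne j$: one must not only use the cancellation in the definition \eqref{def_mu2n}, but also estimate uniformly in $n$ the contribution of the near-diagonal pairs, where \eqref{A1} is essential. Once these bounds are in place, passing to the subsequential limit via the uniform $\dot H^1$-estimate of $u_{n,\lambda}$ and invoking \eqref{A2} yields \eqref{system_R_lambda} together with \eqref{estimate_R_lambda}.
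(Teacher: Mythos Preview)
Your outline is in the right spirit---method of reflections, pairwise stresslet interactions producing the kernel $\mM$, Calder\'on--Zygmund bounds under \eqref{A1}, and \eqref{A2} to close the $\lambda^2$ identification---but it is organized \emph{dually} to the paper's proof, and this difference matters for where the work lies.

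\textbf{The paper's route.} Rather than correcting the effective solution $\bar u_\lambda$, the paper corrects the \emph{test function}: for each smooth divergence-free $\phi$ it introduces an auxiliary field $\phi_n$ solving a Stokes problem with source $\div\big((5\rho\lambda+2\mu_2\lambda^2)D\phi\big)$ and with $D(\phi_n)=D(\phi)$ on each $B_i$. A double integration by parts, using only that $D(u_n)=0$ on the particles and the zero force/torque conditions, collapses to
\[
2\int_{\R^3} D(u_n):(1+\tfrac52\rho\lambda+\mu_2\lambda^2)D(\phi)=\int_{\R^3} g_n\,\phi-\int_{\R^3} g_n\,\phi_n,
\]
so that after passing to the limit $R_\lambda$ is identified with $-\lim\int g\,\phi_n$. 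One then builds an explicit approximation $\phi_n^1$ of $\phi_n$ (a stresslet sum with two layers of correction $S_i^1,S_i^2$) and a remainder $\phi_n^2$. The key technical gain is an extra factor $\lambda^{1/2}$ in $\int g\,\phi_n^2$, obtained by writing $g=-\Delta u_g+\na p_g$ and using a Bogovskii extension to reduce the pairing to $\|D(u_g)\|_{L^2(\cup B_i)}\|D(\phi_n^2)\|_{L^2}$; this localization to the particle region is what turns the a priori $O(\lambda^{11/6})$ energy bound on $\phi_n^2$ into $O(\lambda^{7/3})$. Your attribution of the $\lambda^{1/3}$ gain solely to three-body sums and dyadic counting is not quite the mechanism the paper uses.

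\textbf{What each approach buys.} Working on the $\phi$-side lets one take $\phi\in C^\infty_c$ by density, so all pointwise evaluations $D\phi(x_i)$ are automatic and the regularity coming from $g\in L^{3+\eps}$ is only needed once, through $u_g$. In your direct approach you must carry regularity of $\bar u_\lambda$ throughout (to make sense of $D\bar u_\lambda(x_i)$ and to apply \eqref{A2} to $F(x,y)=D\bar u_\lambda(x):D\phi(y)$, which is not compactly supported---though this is harmless since $\mu_{2,n}$ is). More substantively, you should make explicit where you recover the $\lambda^{1/2}$ localization gain; without it the reflection remainder is only $O(\lambda^{11/6})$ in energy, which falls short of $\lambda^{7/3}$. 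The paper's Bogovskii trick, or an equivalent device restricting the pairing to $\cup_i B_i$, is the missing ingredient in your sketch.
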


\medskip
A few remarks are in order.
\begin{rem}
One has directly from estimate \eqref{estimate_R_lambda}  that for any $p \le \frac{3}{2}$, 
$$\|u-u_\lambda\|_{\dot{W}^{1,p}(\mathbb{R}^3)} \le C \lambda^{\frac73}$$
where $u$ is the solution of the effective model 
\begin{equation}
\left\{
\begin{array}{rcll}
-\div ( 2[\mu + \frac{5}{2} \mu \rho \lambda    + \mu_2 \lambda^2 ]D({u})-\mathbb{I} {p} )&=&g,& \text{on $\mathbb{R}^3$}, \\
 \div({u})&=&0,& \text{on $\mathbb{R}^3$}, 
\end{array}
\right.
\end{equation}
It follows by Sobolev imbedding that  $\|u-u_\lambda\|_{L^r_{loc}} = O(\lambda^{\frac{7}{3}})$, for any $r = \frac{3p}{3 - p} \le 3$.  Moreover, as will be seen below, $(u_{n, \lambda})_{n \in \N}$ is bounded  in $\dot{H}^1 \cap L^6$. Combining the last estimate with Rellich's theorem, it follows easily that 
$$ \limsup_n ||u_{n,\lambda} - u\|_{L^r_{loc}} = O(\lambda^{\frac{7}{3}}), \quad \forall r \le 3. $$
\end{rem}

\begin{rem}
The main assumption of the theorem is the convergence of $\mu_{2,n}$ to $\mu_2(x) \delta_{x=y}$. With regards to the form of $\langle \mu_{2,n} , F \rangle$, {\em cf.} \eqref{def_mu2n_bis}, this convergence corresponds to the local mean field limits alluded to above. In particular, the necessary condition \eqref{meanfield1} derived in \cite{GVH} corresponds to the convergence of $\langle \mu_{2,n} , F \rangle$  for the special case $F(x,y) =  1$. 
\end{rem}

\medskip
The outline of the paper is the following. After preliminary results on the Stokes system, gathered in Section \ref{sec_Stokes}, we adress the proof of Theorem \ref{thm} in Section \ref{sec_proof}. Finally, we turn in Section \ref{sec_A2} to the discussion of assumption \eqref{A2}. Roughly, we show that it is fulfilled by both periodic and random stationary particle distributions that satisfy the separation assumption \eqref{A1}, and we discuss the corresponding limit $\mu_2$.   We rely there much on article \cite{GVH}. We notably show that when the particle distributions is given by an isotropic process (plus technical conditions), then $\mu_2 S : S = \frac{5}{2} \mu |S|^2$, a result that was not given in \cite{GVH}.

\section{Reminder on the Stokes problem} \label{sec_Stokes}
In this section we recall some properties regarding the Stokes equation on an exterior domain.
We denote by $(\mathcal{U}, \mathcal{P})$  the Green function of the Stokes equation: 
\begin{equation}\label{mathcalU}
\mathcal{U}(x):= \frac{1}{8\pi} \left( \frac{I_3}{|x|}+ \frac{x\otimes x}{|x|^3}\right), \quad \mathcal{P}(x) = \frac{1}{4\pi} \frac{x}{|x|^3}. 
\end{equation}
Let $A \in \text{Sym}_{3,\sigma}(\R)$.  We denote by $({V}\left[A\right],{Q}\left[A\right])$  the solution to the Dirichlet problem 
\begin{equation}
\left \{
\begin{array}{rcll}
- \Delta u + \nabla p &=& 0,& \text{ on $\mathbb{R}^3 \setminus B(0,1),$}\\
\div(u)&=&0,& \text{ on $\mathbb{R}^3 \setminus B(0,1),$}\\
u&=& - A x,& \text{ on $ B(0,1),$}\\
\end{array}
\right.
\end{equation}
given by the explicit formula 
\begin{eqnarray}\label{def_explicit_V}
{V}[A](x)&=&-\frac{5}{2}  \frac{A:x\otimes x}{|x|^5}x- \frac{1}{|x|^5} Ax+ \frac{5}{2} \frac{(A : x \otimes x)}{|x|^7}x,\\
{Q}[A](x)&=&-5 \frac{A : x \otimes x}{|x|^5}.
\end{eqnarray}
An important feature of this solution is that, as easily seen  from symmetry considerations, it fulfills the extra conditions 
\begin{eqnarray} \label{noforce_notorque}
\int_{\partial B(0,1)} \sigma\big({V}[A],{Q}[A]\big) \nu =0,&
\displaystyle{\int_{\partial B(0,1)}}  x \times \sigma\big({V}[A],{Q}[A]\big)\nu =0.
\end{eqnarray}
Moreover we can link $V[A]$ to the Green function through the identity 
\begin{equation} \label{def_R}
V[A](x) = \frac{20 \pi}{3}\nabla \mathcal{U}(x) A + R[A](x),
\end{equation}
where $R[A]$ is homogeneous of degree $-4$. Here,  $\nabla \mathcal{U}$ is a third rank tensor defined using Einstein summation convention by
\begin{eqnarray}\label{relation_U}  
\nabla \mathcal{U} A = \left(\partial x_k\, \mathcal{U}_{ij}A_{jk}\right)_{1 \le i \le 3} = - \frac{3}{8\pi} \frac{A: x \otimes x}{|x|^5} x
\end{eqnarray}
Moreover,  a simple calculation yields the identity 
\begin{equation}\label{def_DnablaU}
D\left(\nabla \mathcal{U} A\right) : B  =  \frac{3}{8\pi} \mM(x)A : B =  D\left(\nabla \mathcal{U} B\right) : A 
\end{equation}
where $\mM$ was defined in \eqref{meanfield2}, so that 
\begin{equation}\label{def_R_bis}
D(V[A])(x) = \frac{5}{2}\mM(x)A + D(R[A])(x),
\end{equation}

We also introduce the extensions
\begin{equation}\label{mathcalV}
\begin{array}{lr}
\mathcal{V}[A](x)=
\left\{
\begin{array}{rl}
{V}[A ] & \text{on } B(0,1)^c ,\\
-Ax & \text{on } B(0,1) ,\\
\end{array}
\right.
&
\mathcal{Q}[A ](x)=
\left\{
\begin{array}{rl}
{Q}[A] & \text{on } B(0,1)^c,\\
0 & \text{on }  B(0,1).\\
\end{array}
\right.
\end{array}
\end{equation}
Direct computation shows that 
\begin{equation}\label{div_sigma}
-\div \left(\sigma \left (\mathcal{V}[A], \mathcal{Q}[A]\right ) \right )=5 A x s^1 \text{ in } \mathbb{R}^3,
\end{equation}
where $s^\eta$ is the surface measure on the sphere of radius $\eta$. 

\medskip
We finish this part by recalling a classical estimate for the Stokes equation. Let $w \in W^{1,2}(\cup B_i)$, divergence-free. We consider the unique solution $(v,q)$ satisfying
\begin{equation}\label{eq1}
\left\{
\begin{array}{rcll}
- \div( 2D(v)-\mathbb{I} q)&=&0,& \text{on $\Omega_n$}, \\
 \div(v)&=&0,& \text{on $\Omega_n$}, \\
D(v)&=&D(w),& \text{on $\cup B_i$},
\end{array}
\right.
\end{equation}
with the following conditions
\begin{eqnarray}\label{eq2}
\int_{\partial B_i}\sigma(v,q) n =0, & \displaystyle{\int_{\partial B_i}[\sigma(v,q) n]\times (x-x_i)} =0, \quad \forall 1 \le i \le n.
\end{eqnarray}
Using an integration by parts, one can show that $v$ is a minimizer of 
$$
\left \{\int_{\mathbb{R}^3}|D(u)|^2, \, u\in \dot{H}^1_\sigma(\mathbb{R}^3), \text{ such that } D(u)=D(w) \text{ on } \cup B_i  \right \}
$$
Combining this minimizing property with \cite[Lemma 4.4]{Niethammer&Schubert} we have
\begin{prop}\label{prop_1}
The unique solution $v$ of \eqref{eq1}, \eqref{eq2} satisfies
$$
\|\na v\|^2_{L^2(\mathbb{R}^3)} = 2 \|D(v)\|^2_{L^2(\mathbb{R}^3)}  \leq C  \|D (w)\|_{L^2(\cup B_i)}^2.
$$
\end{prop}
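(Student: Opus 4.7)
The plan is to perform a second-order method of reflections on the \emph{test function side}: given any divergence-free test $\phi \in \dot{H}^1\cap \dot{W}^{1,q}(\R^3)$, we modify it into a field $\phi_n$ that is rigid on every $B_i$, test the weak form of \eqref{eq_N}--\eqref{cond_N} against $\phi_n$, and pass to the limit $n\to\infty$. The two successive reflections will produce, respectively, the Einstein correction $\tfrac{5}{2}\mu\rho\lambda$ and the second order correction $\mu_2\lambda^2$ in the limiting equation; the latter will be identified precisely via assumption \eqref{A2}. All other contributions will be absorbed into the remainder $R_\lambda$.

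\textbf{Construction of $\phi_n$.}
Let $\mathcal{V}_i[A]$ denote the scaled and translated version of $\mathcal{V}[A]$ centered at $x_i$ with radius $r_n$, so that $\mathcal{V}_i[A](x) = -A(x-x_i)$ inside $B_i$ and solves a homogeneous Stokes system outside. Set iteratively
\begin{equation*}
S_i^{(1)} := D(\phi)(x_i), \qquad S_i^{(2)} := \sum_{j\ne i} D\bigl(\mathcal{V}_j[S_j^{(1)}]\bigr)(x_i),
\end{equation*}
and define $\phi_n := \phi + \sum_i \mathcal{V}_i[S_i^{(1)}] + \sum_i \mathcal{V}_i[S_i^{(2)}]$. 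On each $B_i$, $D(\phi_n)$ is then reduced to an $O(r_n\|\nabla^2\phi\|_\infty)$ Taylor residual from the first reflection plus analogous truncation errors from the second. A further small exact correction, controlled by Proposition \ref{prop_1}, makes $\phi_n$ \emph{exactly} rigid on $\cup B_i$, and the separation assumption \eqref{A1} combined with Proposition \ref{prop_1} delivers the necessary $\dot{H}^1$ and $\dot{W}^{1,q}$ bounds on $\phi_n - \phi$.

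\textbf{Identification of the Einstein and $\mu_2$ contributions.}
Plugging $\phi_n$ into the weak form of \eqref{eq_N}--\eqref{cond_N} and using \eqref{cond_N} to kill the boundary integrals on each $\partial B_i$ gives
\begin{equation*}
\int_{\R^3} 2\mu D(u_n) : D(\phi_n) = \int_{\R^3} g_n \cdot \phi_n.
\end{equation*}
The decomposition $D(\phi_n) = D(\phi) + \sum_i D(\mathcal{V}_i[S_i^{(1)}]) + \sum_i D(\mathcal{V}_i[S_i^{(2)}])$ produces three terms on the left. The first converges to $\int 2\mu D(u_\lambda):D(\phi)$. For the second, the identity \eqref{def_R_bis} splits $D(\mathcal{V}_i[A])$ into a singular leading piece $\tfrac{5}{2}\mathcal{M}(\cdot-x_i)A$ and a quadrupole remainder $D(R_i[A])$; combined with weak convergence of $D(u_n)$ and $\rho_n\to\rho$, this yields the Einstein contribution $\lambda \int 5\mu\rho\, D(u_\lambda):D(\phi)$ as in the analysis of \cite{GVRH}. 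For the third, expanding $S_i^{(2)}$ once more via \eqref{def_R_bis} reduces it, to leading order, to the double sum
\begin{equation*}
\frac{25\mu}{2}\, r_n^6 \sum_{i}\sum_{j\ne i} D(u_\lambda)(x_i) : \mathcal{M}(x_i-x_j)\, D(\phi)(x_j),
\end{equation*}
which, since $r_n^6 = \bigl(\tfrac{3\lambda}{4\pi n}\bigr)^2$, is exactly $\lambda^2$ times the pairing of the discrete part of $\mu_{2,n}/\mu$ (recall \eqref{def_mu2n_bis}) against $F(x,y) := D(u_\lambda)(x):D(\phi)(y)$; the subtracted Calder\'on--Zygmund integral in the definition of $\mu_{2,n}$ is precisely what is needed to compensate the singular contribution arising when one substitutes $\rho_n \to \rho$ inside the first Einstein term. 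Assumption \eqref{A2} then passes this quantity to $2\lambda^2 \int \mu_2(x) D(u_\lambda)(x):D(\phi)(x)\,dx$.

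\textbf{Error budget and main obstacle.}
The remainders consist of: (i) the Taylor error in replacing $D(\phi)$ by $D(\phi)(x_i)$ on each $B_i$; (ii) third and higher reflections, handled as a convergent geometric series via \eqref{A1} and Proposition \ref{prop_1}; (iii) the quadrupole remainders $R_i[A]$, whose decay $|y|^{-4}$ makes each pairwise interaction summable; (iv) the substitution $D(u_n)(x_i) \to D(u_\lambda)(x_i)$ inside the pairwise sum. Using $r_n \sim \lambda^{1/3} n^{-1/3}$ and $d_n \gtrsim n^{-1/3}$, together with H\"older against $D\phi \in L^q$ with $q\ge 3$, each contribution is bounded by $C\lambda^{7/3}\|D\phi\|_{L^q}$, matching the claimed rate \eqref{estimate_R_lambda}. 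The central obstacle is point (iv): $D(u_n)$ oscillates strongly near each particle, and one must show that these oscillations correlate only through the empirical measure $\rho_n$, so that the \emph{local} content of \eqref{A2} --- and not merely the scalar average \eqref{meanfield1} of \cite{GVH} --- is precisely what is needed to identify $\mu_2$. This step is exactly what upgrades the conditional result of \cite{GVH} into the unconditional conclusion of Theorem \ref{thm}.
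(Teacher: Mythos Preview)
Your proposal is not a proof of Proposition \ref{prop_1} at all. The proposition is an elementary energy estimate: given the boundary data $D(w)$ on $\cup B_i$, the solution $v$ of the homogeneous Stokes problem \eqref{eq1}--\eqref{eq2} satisfies $\|\nabla v\|_{L^2(\R^3)}^2 \le C\|D(w)\|_{L^2(\cup B_i)}^2$. What you have written is a sketch of the proof of the main Theorem \ref{thm} (method of reflections, identification of the Einstein and $\mu_2$ terms, the $\lambda^{7/3}$ error budget), not of this lemma. Indeed, you explicitly \emph{invoke} Proposition \ref{prop_1} twice in your sketch (``controlled by Proposition \ref{prop_1}'', ``via \eqref{A1} and Proposition \ref{prop_1}''), which would be circular.

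The paper's actual proof of Proposition \ref{prop_1} is short and self-contained: the equality $\|\nabla v\|_{L^2}^2 = 2\|D(v)\|_{L^2}^2$ is the standard identity for divergence-free fields; the inequality follows from the variational characterization of $v$ as the minimizer of $\int_{\R^3} |D(u)|^2$ among divergence-free $u$ with $D(u)=D(w)$ on $\cup B_i$, together with the construction of a competitor $u = \sum_i u_i$ supported on the disjoint balls $B(x_i,2r_n)$ via the Bogovskii operator, whose energy is bounded by $C\|\nabla w\|_{L^2(\cup B_i)}^2$. One then replaces $\nabla w$ by $D(w)$ using Korn's inequality on each $B_i$ (after subtracting a suitable rigid motion, which does not affect $D(w)$), invoking \cite[Lemma 4.4]{Niethammer&Schubert}. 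None of this appears in your proposal.
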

\begin{proof}
The first equality is well-known: it follows from  identity  $\Delta v = 2 \div(D(v))$ and  integration by parts. For the inequality, by the minimizing property of $v$, it is enough to  construct a divergence-free velocity field $u$ that matches the condition $D(u)=D(w)$ on $\cup B_i$ and satisfies the same inequality. Classical considerations about the Bogovskii operator ensure the existence of fields $u_i\in H^1_0((B(x_i,2r_n))$ such that 
$$
\div(u_i)=0  \text{ on $B(x_i,2r_n)$},\: u_i=w-\dashint_{B_i} w \text{ on $B_i$},$$
where using Poincar\'e-Wirtinger inequality we get for all $i$, 
$$
 \|\na u_i\|_{H^1_0(B(x_i,2r_n))} \leq C  \|w-\dashint w\|_{H^1(B_i)} \leq C' \|\nabla w\|_{L^2(B_i)}
$$
with $C,C'$ independent of $n$ by scaling considerations. See \cite[Lemma 18]{Hillairet} for details. We then take $u= \underset{i}{\sum} u_i$. Since the balls $B(x_i,2r_n)$ are disjoint by \eqref{A1} for $\lambda$ small enough, $u$  satisfies $D(u) = D(w)$ on $\cup B_i$ and we have 
$$
 \|u\|^2_{\dot{H}^1(\mathbb{R}^3)}= \underset{i}{\sum} \|\nabla u_i\|^2_{L^2(B(x_i,2r_n))}\leq C \underset{i}{\sum} \|\nabla w\|^2_{L^2(B_i)}.
$$
Moreover, adding a proper rigid vector field to $w$ on each $B_i$, which does not change $D(w)$ on each $B_i$,  we can always assume that $\int_{\partial B_i}w =\int_{\partial B_i}w \times (x-x_i) = 0 $. We conclude by applying \cite[Lemma 4.4]{Niethammer&Schubert}.
\end{proof}

\section{Proof of Theorem \ref{thm}} \label{sec_proof}
{\em By linearity of the Stokes equation, we can restrict to the case  $\mu=1$}. Let $q\ge 3$. The goal is to show that any accumulation point $u_\lambda$ of $u_{n,\lambda}$ satisfies a system of type \eqref{system_R_lambda} with remainder $R_\lambda$ satisfying 
$$ \langle R_\lambda , \phi \rangle \le C \lambda^\frac73 ||\phi||_{W^{1,q}} $$
for all divergence-free $\phi \in \dot{H}^1 \cap W^{1,q}$. By density, it is enough to show such inequality for all divergence-free $\phi \in C^\infty_c(\R^3)$. For any such $\phi$,  we consider $\phi_n$ satisfying 
\begin{equation}\label{eq_phi_n}
\left\{
\begin{array}{rcll}
- \div(2D(\phi_n)-\mathbb{I} q_n)&=&2\div ((\frac{5}{2} \rho  \lambda   +\lambda^2 \mu_2) D(\phi)),& \text{on $\Omega_n$}, \\
 \div(\phi_n)&=&0,& \text{on $\Omega_n$}, \\
\phi_n&=&\phi+ \text{translation}+ \text{rotation},& \text{on $B_i$},
\end{array}
\right.
\end{equation}
with the following conditions
\begin{equation}\label{cond_phi}
\begin{array}{rcl}
\displaystyle{\int_{\partial B_i}}\sigma(\phi_n,q_n) \nu &=& - 2\displaystyle{\int_{\partial B_i}}( \frac{5}{2}\rho \lambda  + \mu_2 \lambda^2) D(\phi)\nu dx, \\
\displaystyle{\int_{\partial B_i}}[\sigma(u_n,p_n) \nu]\times (x-x_i) &=& - 2\displaystyle{\int_{\partial B_i}}[ (\frac{5}{2}\rho \lambda +  \mu_2 \lambda^2) D(\phi)\nu]\times(x-x_i)dx .
\end{array}
\end{equation}
Note that $\phi_n$ depends implicitly on $\lambda$. Similarly, we shall note $u_n$ instead of $u_{n,\lambda}$ for short. Testing $\phi-\phi_n$ in equation \eqref{eq_N} we get 
\begin{align*}
\int_{\Omega_n} (\phi-\phi_n)\cdot  g_n &= -\int_{\Omega_n} \div(2 D(u_n)-\mathbb{I} p_n)\cdot (\phi-\phi_n)\\
&=2 \int_{\Omega_n} D(u_n) : D(\phi-\phi_n)+\underset{i}{\sum}\int_{\partial B_i} \sigma(u_n,p_n)\nu \cdot (\phi-\phi_n) \\
&= 2 \int_{\Omega_n} D(u_n) : D(\phi) -2 \int_{\Omega_n} D(u_n) :D(\phi_n)-\underset{i}{\sum}\int_{B_i} g_n \cdot(\phi-\phi_n).
\end{align*}
We remind that in the second line of the above computations, the unit normal vector $\nu$ is pointing outward the balls. Using equations \eqref{eq_phi_n} and \eqref{cond_phi} we have
\begin{align*}
&-2 \int_{\Omega_n} D(u_n) :D(\phi_n)\\
&= \int_{\Omega_n} \div(2D(\phi_n)-q_n\mathbb{I})\cdot u_n+ \underset{i}{\sum} \int_{\partial B_i} [\sigma(\phi_n,q_n)\nu]\cdot  u_n\\
&= -\int_{\Omega_n} 2\div((\frac{5}{2} \rho \lambda  \ +  \mu_2 \lambda^2)D(\phi)) \cdot u_n+ \underset{i}{\sum} \int_{\partial B_i} [\sigma(\phi_n,q_n)\nu]\cdot  u_n\\
&=2\int_{\Omega_n} ( \frac{5}{2} \rho \lambda  +  \mu_2 \lambda^2) D(\phi): D(u_n).
\end{align*}
We get the following relation for all $\phi$ using the fact that $D(u_n)=0$ on $B_i$:
\begin{equation} \label{eq_un_phi_n}
2\int_{\mathbb{R}^3} D(u_n): (1+   \frac{5}{2}\rho \lambda + \mu_2 \lambda^2 ) D(\phi)= \int_{\mathbb{R}^3} \phi \cdot g_n -\int_{\mathbb{R}^3} \phi_n \cdot g_n.
\end{equation}
By a simple energy estimate, $u_n$ also satisfies 
$$   \int_{\R^3} |\na u_n|^2 = 2\int_{\mathbb{R}^3} |D(u_n)|^2 = \int_{\mathbb{R}^3} u_n \cdot g_n \le \|u_n\|_{L^6}  \|g_n\|_{L^\frac65} \le C ||\na u_n||_{L^2}$$
where the last inequality comes from the Sobolev embedding and the boundedness of $(g_n)_{n \in \N}$ in $L^\frac65$. Hence,  $(u_n)_{n \in \N}$ is bounded in $\dot{H}^1 \cap L^6$. Denoting $u_\lambda = \lim_k u_{n_k}$ an accumulation point, we deduce from \eqref{eq_un_phi_n} with $n = n_k$ that 
$$ 2\int_{\mathbb{R}^3} D(u_\lambda): (1+  \frac{5}{2}\rho \lambda + \mu_2 \lambda^2 ) D(\phi)= \int_{\mathbb{R}^3} \phi \cdot g + \langle R_\lambda , \phi \rangle $$
where $\langle R_\lambda , \phi \rangle = - \lim_k \int_{\R^3}\phi_{n_k} \cdot g_{n_k}$. Note that this limit exists because all other terms in \eqref{eq_un_phi_n} converge when $n = n_k$, $k \rightarrow +\infty$. Moreover, it is clearly linear in $\phi$ as $\phi_n$ is linear in $\phi$. Furthermore, testing $\phi_n - \phi$ in \eqref{eq_phi_n}, similar integrations by parts lead to 
$$  2 \int_{\R^3} |D(\phi_n)|^2 = 2 \int_{\R^3} D(\phi_n) : D(\phi) - 2 \int_{\R^3} (\frac{5}{2} \lambda \rho + \mu_2 \lambda^2) D(\phi) : (D(\phi_n ) - D(\phi)) $$ 
Cauchy Schwartz inequality implies that $\phi_n$ is bounded uniformly in $n$ in $\dot{H}^1$, hence in $L^6$. Eventually, as $g_n \rightarrow g$ strongly in $L^{\frac65}$,  
$\langle R_\lambda , \phi \rangle = - \lim_k \int_{\R^3}\phi_{n_k} \cdot g$. Finally,  to prove the theorem, it is enough  to show that 
\begin{equation} \label{main_estimate}
 \forall q \ge 3, \quad \limsup_n \left|  \int_{\R^3}\phi_{n} \cdot g \right| \le C \lambda^\frac73 ||\phi||_{W^{1,q}} 
\end{equation}

\medskip
In order to obtain \eqref{main_estimate}, we shall write $(\phi_n,q_n)=(\phi_n^1+\phi_n^2,q_n^1+q_n^2)$, where $\phi_n^1$ is a (somehow natural) approximation of $\phi^1_n$ and where $\phi_n^2$ is a remainder. Namely, we look for an approximation $\phi^1_n$ of the form 
\begin{equation*}
\phi_n^1=\nabla \mathcal{U} \star \left(2(\lambda  \frac{5}{2}|\mathcal{O}|\rho +\lambda^2 \mu_2) D(\phi) \right) - r_n \underset{i}{\sum} \mathcal{V}\left[A_i\right]\left(\frac{x-x_i}{r_n}\right), 
\end{equation*}
where $\mathcal{U}, \mathcal{V}$ are defined in \eqref{mathcalU}, \eqref{mathcalV}. The rough idea behind this approximation is that the first term at the right-hand side  should take care of the source term in \eqref{eq_phi_n}, while the second term should take care of the boundary conditions at the balls $B_i$. In particular, the field $\phi_{\R^3} := \nabla \mathcal{U}*\left(2(\lambda  \frac{5}{2}|\mathcal{O}|\rho +\lambda^2 \mu_2) D(\phi) \right)$ solves the Stokes equation 
$$ -\Delta \phi_{\R^3} + \na q_{\R^3} = \div \left(2(\lambda  \frac{5}{2}|\mathcal{O}|\rho +\lambda^2 \mu_2) D(\phi) \right), \quad \div \phi_{\R^3} = 0 \quad \text{ in } \: \R^3. $$
 while each term in the sum, that is  $\phi_{i,n} := - r_n \mathcal{V}\left[A_i\right]\left(\frac{x-x_i}{r_n} \right)$ solves 
$$ -\Delta \phi_{i,n} + \na q_{i,n} = 0, \quad \div  \phi_{i,n} = 0 \quad \text{ in } \: \R^3\setminus B_i, \quad \phi_{i,n}\vert_{B_i} = A_i (x-x_i) $$
By looking to \eqref{eq_phi_n}, it is tempting to take $A_i = D\phi_i$, where 
$$  D\phi_i := D(\phi)(x_i)$$ 
as $\phi$ should be close to this  value on the small ball $B_i$. However, this approximation is not accurate enough, and would only allow to recover Einstein's formula. To go beyond, one must account for two extra contributions. The first one is the trace left at the balls by $\phi_{\R^3}$. More precisely, it will be enough to correct the trace of the $O(\lambda)$ term in $\phi_{\R^3}$. The second  contribution is  the trace left by  all $\phi_{j,n}$, $j \neq i$, on ball $B_i$, which corresponds to binary interactions between particles. Again, it will be enough to account for the least decaying term in  $\phi_{j,n}$,  {\it cf.} decomposition \eqref{def_R}.  This leads to the following definition: 
\begin{equation}\label{def_phi_n^1}
\phi_n^1=\nabla \mathcal{U}*\left(2(\lambda  \frac{5}{2}\rho +\lambda^2 \mu_2) D(\phi) \right)- r_n \underset{i}{\sum} \mathcal{V}\left[D\phi_i+S_i^1+S_i^2\right]\left(\frac{x-x_i}{r_n} \right),
\end{equation}
where, using relation \eqref{def_DnablaU}
\begin{equation}\label{def_S_i^1}
\begin{aligned}
S_i^1& := - \dashint_{B_i} D \Big( \nabla \mathcal{U} * (\lambda  {5}|\mathcal{O}|\rho  D(\phi)) \Big)(x)  dx = - \frac{15}{8\pi} \lambda \, \dashint_{B_i}   \Big(\mM * (\rho D(\phi))\Big)(x) dx \notag, \\
&= - \frac{15}{8\pi} \lambda \,  \dashint_{B_i}   \left(\int_{\mathbb{R}^3} \mM(x-y) D(\phi)(y) \rho(y) dy\right)dx. 
\end{aligned}
\end{equation}
while, still using   \eqref{def_DnablaU}
\begin{equation}\label{def_S_i^2}
\begin{aligned}
S_i^2 & :=   \sum_{j \neq i} \dashint_{B_i} \frac{20\pi}{3} D\Big(\nabla \mathcal{U} D\phi_j \Big) \big(\frac{x-x_j}{r_n} \big) dx  =   \frac{5}{2} r_n^3 \dashint_{B_i} \underset{j \neq i}{\sum} \mM\left(x-x_j\right) D\phi_j dx \notag,  \\
& =  \frac{15}{8\pi} \frac{\lambda}{n}   \underset{j \neq i}{\sum} \, \dashint_{B_i} \mM\left(x-x_j\right) D\phi_j dx.
\end{aligned}
\end{equation}
Direct computations using formula \eqref{div_sigma} yield
\begin{align*}
-\div \left(\sigma \left(\phi_n^1,q_n^1 \right) \right) & = \div\Big(2\big(\frac{5}{2}\rho \lambda   + \mu_2 \lambda^2\big) D(\phi) \Big) - 5 \underset{i}{\sum} \big( D\phi_i+S_i^1+S_i^2\big) \nabla 1_{B_i}  \\ 
&=\div\left(\big(5 \rho \lambda + 2 \mu_2 \lambda^2\big) D(\phi) - \frac{5\lambda}{n}\underset{i}{\sum} \frac{1}{|B_i|}1_{B_i}  \big( D\phi_i+S_i^1+S_i^2 \big)  \right) \quad \text{in} \: \R^3.
\end{align*}
Using the definition of $S_i^1$ and $S_i^2$ we find 
\begin{align}
\nonumber
-\div \left(\sigma \left(\phi_n^1,q_n^1 \right) \right)  & =\div\bigg((5\lambda  \rho+2 \lambda^2 \mu_2) D(\phi) - \frac{5\lambda}{n}\underset{i}{\sum} \frac{1}{|B_i|} 1_{B_i}D\phi_i\\
\nonumber
&-   \frac{5 \lambda}{n}\underset{i}{\sum} \frac{1}{|B_i|} 1_{B_i} \Big(- \frac{15\pi}{8} \lambda \dashint_{B_i} \int_{\mathbb{R}^3} \mM(x-y) D(\phi)(y) \rho(y) dy dx \Big)\\
\nonumber
&-  \frac{5 \lambda }{n} \underset{i}{\sum} \frac{1}{|B_i|}1_{B_i}  \Big(- \frac{15\pi}{8}  \frac{\lambda}{n}  \dashint_{B_i}    \underset{ j \neq i }{\sum} \mM(x-x_j)D\phi_j dx    \Big)   \bigg)\\ 
\nonumber
&= 5 \lambda \div \bigg(  \rho D(\phi) -  \frac{1}{n}\underset{i}{\sum} \frac{1}{|B_i|}1_{B_i}D\phi_i \bigg)\\
&+ 2 \lambda^2 \div  \bigg( \mu_2 D(\phi) - \frac{75}{16\pi}  \frac{1}{n^2}\underset{i}{\sum} \frac{1}{|B_i|} 1_{B_i} \underset{ j \neq i }{\sum}\dashint_{B_i} \mM(x-x_j)D\phi_j dx\\
\label{eq_phi1n}
&+ \frac{75}{16 \pi} \frac{1}{n} \underset{i}{\sum} \frac{1}{|B_i|}1_{B_i} \dashint_{B_i} \int_{\mathbb{R}^3}  \mM(x-y)D(\phi)(y) \rho(y) dy dx \Big)\bigg)  \quad \text{in} \: \R^3.
\end{align}
Moreover, thanks to property \eqref{noforce_notorque}, it is easily seen that 
\begin{align*}
\int_{\partial B_i} \sigma(\phi_n^1,q_n^1) \nu  & = - 2\displaystyle{\int_{\partial B_i}}( \frac{5}{2}\rho \lambda  + \mu_2 \lambda^2) D(\phi)\nu dx,\\
\int_{\partial B_i}  x \times (\sigma(\phi_n^1,q_n^1) \nu) & = - 2 \int_{\partial B_i} ((\frac{5}{2}\rho \lambda +  \mu_2 \lambda^2) D(\phi)\nu)\times(x-x_i)dx.
\end{align*}
It follows that  $(\phi_n^2,q_n^2)$ satisfies 
\begin{equation} \label{eq_phi_2_n}
\div(\sigma(\phi_n^2,q_n^2))=0 \text{ on } \Omega_n, \quad  \int_{\partial B_i} \sigma(\phi_n^2,p_n^2) \nu=0, \displaystyle{\int_{\partial B_i}}  \sigma(\phi_n^2,p_n^2)\nu \times (x-x_i) =0.
\end{equation}
with boundary condition
$$
D(\phi_n^2) =  D({\psi}_n^2+\tilde{\psi}_n^2) \quad \text{ on } \: \cup B_i,
$$
where ${\psi}_n^2$ and $\tilde{\psi}_n^2$ are defined by: 
$$ {\psi}_n^2(x)\vert_{B_i} :=\phi(x) -D\phi_i \cdot (x-x_i), \quad x \in B_i $$ 
and 
\begin{equation*}
\begin{aligned}
\tilde{\psi}_n^2(x)  = & 
-(S_i^1+S_i^2)\cdot(x-x_i) \\ 
& - \left(\nabla \mathcal{U}* (2(\frac{5}{2} \rho  \lambda  +  \mu_2 \lambda^2) D(\phi))- r_n \underset{j\neq i}{\sum}{V}\left[D\phi_j+S_j^1+S_j^2 \right] \left(\frac{x-x_j}{r_n}\right) \right), \quad x \in B_i. 
\end{aligned}
\end{equation*}
We aim now at estimating both terms $\displaystyle{\int g \, \phi_n^1}$ and $\displaystyle{\int g \, \phi_n^2}$.
\subsection{Estimate of $\phi_n^2$}
Most of this paragraph is dedicated to the derivation of 
\begin{prop} \label{prop_estimate_phi2n} 
For all $q \ge 2$,
$$ \limsup_n \|D(\phi_2^n)\|_{L^2(\R^3)} \le C_q \left( \lambda^{\frac52 - \frac2q}  + \lambda^{\frac{11}{6}} \right) ||D(\phi)||_{L^q} $$
\end{prop}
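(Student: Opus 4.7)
The first step is to apply Proposition~\ref{prop_1} to $(\phi_n^2, q_n^2)$, which solves the homogeneous Stokes system \eqref{eq_phi_2_n} with $D(\phi_n^2)=D(\psi_n^2+\tilde\psi_n^2)$ on $\cup B_i$. This immediately reduces the task to estimating $\|D(\psi_n^2)\|_{L^2(\cup B_i)}$ and $\|D(\tilde\psi_n^2)\|_{L^2(\cup B_i)}$. The first is easy: on each $B_i$, $D(\psi_n^2)=D\phi(x)-D\phi(x_i)$ is the oscillation of the smooth field $D\phi$ over a ball of radius $r_n\sim(\lambda/n)^{1/3}$, so Taylor expansion and summation over the $n$ balls yield a bound $\lesssim \lambda^{5/6}n^{-1/3}\|D^2\phi\|_\infty$ that vanishes in the $\limsup$ and contributes nothing.

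The core of the proof is the estimate for $\tilde\psi_n^2$, where the cancellations built into $S_i^1$ and $S_i^2$ must be exposed. I split $D(\tilde\psi_n^2)$ on each $B_i$ into four pieces. Piece A combines $-S_i^1$ with $-D(\na\mathcal{U}*(5\lambda\rho D\phi))(x)$; by identity \eqref{def_DnablaU} it reduces to $-\tfrac{15\lambda}{8\pi}[(\mM*(\rho D\phi))(x)-\dashint_{B_i}\mM*(\rho D\phi)]$, the oscillation on $B_i$ of a Calder\'on--Zygmund transform. Piece B extracts via decomposition \eqref{def_R} the dominant (degree $-2$) part $\tfrac{20\pi}{3}\na\mathcal{U}((x-x_j)/r_n)D\phi_j$ of each $r_n V[D\phi_j]((x-x_j)/r_n)$, $j\ne i$; differentiation, combined with the subtraction of $S_i^2$, leaves $\tfrac{15\lambda}{8\pi n}\sum_{j\ne i}[\mM(x-x_j)-\dashint_{B_i}\mM(\cdot-x_j)]\,D\phi_j$, again an oscillation on $B_i$. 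Piece C is the uncorrected $-D(\na\mathcal{U}*(2\lambda^2\mu_2 D\phi))=-\tfrac{3\lambda^2}{4\pi}\mM*(\mu_2 D\phi)$, whose extra $\lambda^2$ makes it $O(\lambda^{5/2-1/q}\|D\phi\|_{L^q})$ in $L^2(\cup B_i)$ via H\"older and $L^q$-boundedness of $\mM*$. Piece D collects the degree-$(-4)$ remainders $r_n R[D\phi_j+S_j^1+S_j^2]((x-x_j)/r_n)$ together with the $V[S_j^1], V[S_j^2]$ insertions; these are controlled using \eqref{A1} to bound the discrete potentials $\tfrac{1}{n}\sum_{j\ne i}|x_i-x_j|^{-\alpha}$ for $\alpha=3,4,5$, and they fit within the $\lambda^{11/6}$ budget.

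Pieces A and B are each an oscillation on a ball of radius $r_n$, so they are estimated by combining (i) the Poincar\'e--Sobolev inequality $\|f-\dashint_{B_i}f\|_{L^2(B_i)}\lesssim r_n^{5/2-3/q}\|\na f\|_{L^q(B_i)}$; (ii) the $L^q$-boundedness of $h\mapsto \mM*h$ for $1<q<\infty$ for the continuous convolution in Piece A, and the analogous estimate on the discrete transform in Piece B; (iii) discrete H\"older summation across the $n$ balls, exploiting $n|B_i|=\lambda$. A careful accounting of the prefactors $\lambda$ (Piece A), $\lambda=\lambda/n\times n$ (Piece B), $r_n^{5/2-3/q}=(\lambda/n)^{5/6-1/q}$ and $n^{1-2/q}$ produces the target exponent $\lambda^{5/2-2/q}$.

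The main obstacle is Piece B: the discrete sum $\tfrac{1}{n}\sum_{j\ne i}\mM(x-x_j)D\phi_j$ involves a Calder\'on--Zygmund kernel and is only conditionally convergent, so a direct pointwise bound on $B_i$ followed by summation would introduce spurious factors of $n^{1/q}$. The key idea is to \emph{combine} Pieces A and B so that their sum equals the oscillation on $B_i$ of the mean-field error $\tfrac{1}{n}\sum_{j\ne i}\mM(x-x_j)D\phi_j - \mM*(\rho D\phi)(x)$, and then to exploit the separation assumption \eqref{A1} via a Riemann-sum argument to treat the discrete CZ operator as a perturbation of the continuous one. This is exactly where the PV-style design of $S_i^2$ as a local corrector is indispensable, and where a more refined analysis would make contact with assumption \eqref{A2}.
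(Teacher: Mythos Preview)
Your overall architecture matches the paper: apply Proposition~\ref{prop_1}, dispose of $\psi_n^2$ by Taylor expansion, and split $D(\tilde\psi_n^2)$ into pieces that correspond to the paper's $E_1,\dots,E_5$. Piece~C ($=E_4$) is handled correctly. But two steps contain genuine gaps.

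First, in Piece~D you propose to control the full $V[S_j^1+S_j^2]$ contributions ($=E_5$ in the paper) by bounding the discrete potentials $\tfrac{1}{n}\sum_{j\ne i}|x_i-x_j|^{-\alpha}$ for $\alpha=3,4,5$. The case $\alpha=3$ fails: under \eqref{A1} one has $\sum_{j\ne i}|x_i-x_j|^{-3}\sim n\log n$, so a pointwise bound $|\mM|\lesssim|\cdot|^{-3}$ on the leading (order $-3$) part of $D(V[S_j^1+S_j^2])$ yields a divergent estimate. The paper handles $E_5$ via the \emph{discrete Calder\'on--Zygmund inequality} of Lemma~\ref{lemmaGVH} (taken from \cite{GVH}),
\[
\sum_i\Bigl|\sum_{j\ne i}r_n^3\,\mM(x_i-x_j)A_j\Bigr|^q\le C\lambda^{q-1}\sum_i|A_i|^q,
\]
which exploits the cancellation (zero spherical average) in $\mM$. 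The same lemma is what controls $\sum_i|S_i^2|^q$ in the first place. It is $E_5$, not Pieces~A and~B, that produces the $\lambda^{5/2-2/q}$ term in the final bound.

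Second, for Piece~A your Poincar\'e--Sobolev step requires $\nabla\bigl(\mM*(\rho D\phi)\bigr)\in L^q$, hence essentially $\rho\in W^{1,q}$; but $\rho$ is only assumed bounded. The paper instead proves $\limsup_n\|E_1\|_{L^2(\cup B_i)}=0$ by mollifying $\rho D\phi$ with $\chi_\eta$, using H\"older continuity of the mollified part and $L^q$-smallness of the remainder, and sending $n\to\infty$ before $\eta\to0$. For Piece~B ($=E_2$) the paper neither combines it with~A, nor uses a Riemann-sum argument, nor invokes \eqref{A2}: it simply applies the mean-value bound $\bigl|\mM(x-x_j)-\dashint_{B_i}\mM(\cdot-x_j)\bigr|\le Cr_n|x_i-x_j|^{-4}$ and sums the resulting $\alpha=4$ potential (which \emph{is} uniformly bounded under \eqref{A1}), obtaining the $\lambda^{11/6}$ term. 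Assumption~\eqref{A2} plays no role in this proposition; it enters only in the estimate of $\phi_n^1$ (Proposition~\ref{prop_estimate_gphi1n}).
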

Before we prove this proposition, we show that it implies 
\begin{prop} \label{prop_estimate_gphi2n} 
For all $q \ge 2$,
$$  \limsup_n \Big| \int_{\R^3} g \phi^2_n \Big| \le C_{g,q}  \lambda^{\frac12} \left( \lambda^{\frac52 - \frac2q}  + \lambda^{\frac{11}{6}} \right) ||D(\phi)||_{L^q} $$
\end{prop}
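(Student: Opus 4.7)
The goal is to upgrade the $\dot{H}^1$-bound on $\phi_n^2$ from Proposition \ref{prop_estimate_phi2n} into a bound on $\int g \cdot \phi_n^2$ that gains an extra factor $\lambda^{1/2}$. The naive Sobolev-Hölder inequality $|\int g \cdot \phi_n^2| \le \|g\|_{L^{6/5}} \|\phi_n^2\|_{L^6} \le C \|g\|_{L^{6/5}} \|D\phi_n^2\|_{L^2}$ does not suffice; the extra $\lambda^{1/2}$ must come from the fact that the effective source of $\phi_n^2$ is localized on $\cup B_i$, whose total Lebesgue measure is exactly $\lambda$ under \eqref{A1}.

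The plan is to proceed by duality with the Stokes operator. Let $(\Psi_g, \pi_g) \in \dot{H}^1 \cap L^6(\R^3)$ be the unique solution of
$$ -\Delta \Psi_g + \nabla \pi_g = g, \qquad \div \Psi_g = 0 \quad \text{in } \R^3. $$
The hypothesis $g \in L^{3+\eps}$, combined with Calderón-Zygmund for Stokes and Morrey's embedding, yields $\nabla \Psi_g \in L^\infty(\R^3)$, with a norm $C_g$ depending only on $g$. Since both $\Psi_g$ and $\phi_n^2$ are divergence-free and lie in $\dot{H}^1(\R^3)$, integration by parts gives the duality identity
$$ \int_{\R^3} g \cdot \phi_n^2 \;=\; 2 \int_{\R^3} D\Psi_g : D\phi_n^2. $$
I split the right-hand side into its contributions on $\cup B_i$ and on $\Omega_n$. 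On $\cup B_i$, of measure $\lambda$, Cauchy-Schwarz directly gives
$$ \Big|\int_{\cup B_i} D\Psi_g : D\phi_n^2\Big| \;\le\; \lambda^{1/2} \|D\Psi_g\|_{L^\infty} \|D\phi_n^2\|_{L^2} \;\le\; C_g\, \lambda^{1/2} \|D\phi_n^2\|_{L^2}, $$
which combined with Proposition \ref{prop_estimate_phi2n} already provides the claimed estimate for this piece.

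For the contribution on $\Omega_n$, I exploit the homogeneous Stokes equation $\div \sigma(\phi_n^2, q_n^2) = 0$ on $\Omega_n$ together with the zero force/torque conditions \eqref{eq_phi_2_n}: integration by parts reduces the volume integral to a sum of boundary terms $\sum_i \int_{\partial B_i} (\Psi_g - R_i) \cdot \sigma(\phi_n^2, q_n^2) \nu$, where $R_i$ is the best rigid-body approximation of $\Psi_g$ on $B_i$ (freely subtracted thanks to the vanishing of force and torque). A Korn-Poincaré estimate on each ball $B_i$ of radius $r_n$ together with $\nabla \Psi_g \in L^\infty$ gives $\|\Psi_g - R_i\|_{L^2(\partial B_i)} \lesssim r_n^{2} \|\nabla \Psi_g\|_{L^\infty}$, and a dual trace estimate for the Stokes traction $\sigma(\phi_n^2,q_n^2)\nu$ on the separation annulus $B(x_i, 2 r_n) \setminus B_i$ (of uniform thickness $\sim n^{-1/3}$ by \eqref{A1}) allows one to sum coherently and bound this boundary contribution by the same order $\lambda^{1/2} \|D\phi_n^2\|_{L^2}$ (in fact, with a factor that vanishes as $n \to \infty$).

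The main obstacle is precisely this last step, namely controlling $\sum_i \|\sigma(\phi_n^2,q_n^2)\nu\|_{H^{-1/2}(\partial B_i)}$ modulo rigid modes by $\|D\phi_n^2\|_{L^2}$. The separation assumption \eqref{A1} is essential here, as it yields disjoint uniform-thickness annuli on which local Stokes energy estimates sum back coherently to the global $\dot{H}^1$ energy of $\phi_n^2$.
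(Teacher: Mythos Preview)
Your approach is correct and is essentially the one the paper takes: introduce the Stokes solution $u_g$ with source $g$ (your $\Psi_g$), use the duality $\int g\cdot\phi_n^2 = 2\int_{\R^3} D(u_g):D(\phi_n^2)$, and extract the factor $\lambda^{1/2}$ from the fact that the effective coupling is supported on $\cup B_i$, whose measure is $\lambda$. The difference is only in the execution of what you call ``the main obstacle''. You propose to estimate the boundary contribution $\sum_i \int_{\partial B_i}(\Psi_g-R_i)\cdot\sigma(\phi_n^2,q_n^2)\nu$ by pairing a Korn--Poincar\'e bound on $\Psi_g-R_i$ with a dual trace bound on the traction in each annulus; this works, but requires care with the pressure and the $H^{-1/2}$ scaling. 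The paper sidesteps this entirely: it extends $u_g-R_i$ from $B_i$ to a divergence-free field $U_g^i\in H^1_0(B(x_i,2r_n))$ via the Bogovski\u{\i} operator (with the scale-invariant bound $\|\nabla U_g^i\|_{L^2}\le C\|D(u_g)\|_{L^2(B_i)}$), sets $U_g=\sum_i U_g^i$, and then reverses the integration by parts on the annuli to collapse everything into the single identity
\[
\int_{\R^3} g\cdot\phi_n^2 \;=\; 2\int_{\R^3} D(U_g):D(\phi_n^2),
\]
whence one Cauchy--Schwarz gives $\big|\int g\cdot\phi_n^2\big|\le C\|D(u_g)\|_{L^2(\cup B_i)}\|D(\phi_n^2)\|_{L^2}\le C\lambda^{1/2}\|D(u_g)\|_{L^\infty}\|D(\phi_n^2)\|_{L^2}$. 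This Bogovski\u{\i} extension is precisely the device that turns your sketched annulus traction estimate into a one-line bound; so your ``obstacle'' is genuine but already resolved in the paper by this construction. (Your parenthetical claim that the boundary piece actually vanishes as $n\to\infty$ is stronger than what either argument proves or needs.)
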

Note the extra factor $\lambda^{1/2}$ compared to Proposition \ref{prop_estimate_phi2n}, crucial to obtain a $o(\lambda^2)$ error. Note also that for $q \ge 3$,  this term is bounded by $\lambda^{\frac73}  ||D(\phi)||_{L^q} $. 
\begin{proof}
We introduce the solution $u_g$ of the Stokes equation 
\begin{equation} \label{eq_ug}
 -\Delta u_g + \na p_g = g, \quad \div g = 0, \quad \text{ in } \: \R^3.
 \end{equation}
As $g \in L^{3+\eps}$, $u_g \in W^{2,3+\eps}_{loc}$, so that $D(u_g)$ is continuous.    
Integrations by parts yield 
\begin{align*}
\int_{\R^3} g \phi^2_n & = \int_{\R^3}(-\Delta u_g + \na p_g) \phi^2_n = 2 \int_{\R^3} D(u_g) : D(\phi^2_n) \\
& = 2 \int_{\cup B_i} D(u_g) : D(\phi^2_n) - \sum_i \int_{\pa B_i} u_g \cdot \sigma(\phi^2_n,  q^2_n)\nu \\
& = 2 \int_{\cup B_i} D(u_g) : D(\phi^2_n) - \sum_i \int_{\pa B_i} (u_g + u^i_g + \omega^i_g \times (x-x_i)) \cdot \sigma(\phi^2_n,  q^2_n)\nu
\end{align*}
for any constant vectors $u^i_g$, $\omega^i_g$, $1 \le i \le n$, by the last two relations in \eqref{eq_phi_2_n}. 
As $u_g + u^i_g + \omega^i_g \times (x-x_i)$ is divergence-free, one has 
$$ \int_{\pa B_i} (u_g + u^i_g + \omega^i_g \times (x-x_i)) \cdot \nu = 0.  $$
We can apply classical considerations on the Bogovskii operator \cite{Galdi}: for  any $1 \le i \le n$, there exists $U_g ^i \in H^1_0(B(x_i, 2r_n))$ such that 
$$ \div U_g ^i = 0 \quad \text{ in } \: B(x_i, 2r_n), \quad U_g ^i = u_g + u^i_g + \omega^i_g \times (x-x_i)  \quad \text{ in } \: B_i $$
and with  
$$ ||\na U_g ^i||_{L^2} \le C_{i,n} ||u_g + u^i_g + \omega^i_g \times (x-x_i)||_{W^{1,2}(B_i)} $$
Furthermore, by a proper choice of $u_g ^i$ and $\omega_g^i$, we can ensure the Korn inequality:  
$$ ||u_g + u^i_g + \omega^i_g \times (x-x_i)||_{W^{1,2}(B_i)} \le c'_{i,n} ||D(u_g)||_{W^{1,2}(B_i)} $$
resulting in 
\begin{equation*} 
||\na U_g ^i||_{L^2} \le C ||D(u_g)||_{L^2(B_i)} 
\end{equation*}
where the constant $C$ in the last inequality can be taken independent of $i$ and $n$ by translation and scaling arguments. Extending $U_g ^i$ by zero, and denoting $U_g = \sum U_g ^i$, we have for 
$d_n > 4 r_n$ (which is implied by \eqref{A1} for $\lambda$ small enough): 
\begin{equation} \label{control_Ug} 
||\na U_g||_{L^2} \le C ||D(u_g)||_{L^2(\cup B_i)} 
\end{equation}
Back to our calculation, we find 
\begin{align*}
\int_{\R^3} g \phi^2_n & = 2 \int_{\cup B_i} D(U_g) : D(\phi^2_n) - \sum_i \int_{\pa B_i} U_g \cdot \sigma(\phi^2_n,  q^2_n)\nu \\
& = 2 \int_{\R^3} D(U_g) : D(\phi^2_n) 
\end{align*}
By using \eqref{control_Ug} and Cauchy-Schwartz inequality, we end up with 
 \begin{align*}
\big| \int_{\R^3} g \phi^2_n \big| & \le C ||D(u_g)||_{L^2(\cup B_i)} \|D(\phi^2_n)\|_{L^2(\R^3)} \le C ||D(u_g)||_{L^\infty} \lambda^{\frac12}  \|D(\phi^2_n)\|_{L^2(\R^3)}
\end{align*}
so that combining with Proposition \ref{prop_estimate_phi2n} yields the result. 
\end{proof} 

\medskip
We now turn to the proof of Proposition \ref{prop_estimate_phi2n}. Proposition \ref{prop_1} implies
\begin{equation} \label{phi2n_psi}
\|\nabla \phi_n^2 \|_{L^2 (\Omega_n)}^2 \leq C \left(  \|D(\psi_n^2)\|_{L^2(\cup B_i)}^2+\|D( \tilde{\psi}_n^2)\|_{L^2(\cup B_i)}^2 \right)
\end{equation}
As regards $\psi_n^2$, we compute
\begin{equation}\label{estimation1}
\begin{aligned}
\|D(\psi_n^2)\|_{L^2(\cup B_i)}^2 & = \underset{i}{\sum} \int_{B_i} |D(\phi)- D\phi_i |^2 dx \\
 & \leq \|\nabla^2 \phi\|_{\infty}^2 \underset{i}{\sum} \int_{B_i} r_n^2 dx
\leq \|\nabla ^2 \phi\|_{\infty}^2 \lambda \,   r_n^2 \:  \xrightarrow[n \to \infty]{} \: 0.
\end{aligned}
\end{equation}
As regards $\tilde \psi_n^2$,  we use the identities  \eqref{def_DnablaU} and  \eqref{def_R_bis}   to write for all $x \in B_i$
\begin{align*}
D(\tilde{\psi}_n^2)(x) = & - \frac{15}{8\pi} \lambda \, \mM *  \left(\rho  D(\phi)\right) (x) - S_i^1\\
&+ \frac{15 \lambda}{8\pi n} \underset{j \neq i }{\sum} \mM(x-x_j)D \phi_j -S_i^2\\
&+r_n^2 \frac{3\lambda}{4 \pi n} \underset{j \neq i }{\sum}D \left( R[D\phi_j] \right)(x-x_j) \\
&- \frac{3\pi}{4}\lambda^2  \, \mM * \left(\mu_2 D(\phi)\right)(x) \\
&+ \underset{j \neq i}{\sum}D \left( {V}\left[S_j^1+S_j^2 \right]\right)\left(\frac{x-x_j}{r_n}\right) = \underset{i=1}{\overset{5}{\sum}}E_i(x).
\end{align*}
For $E_4$ we have for all $q \in [2, \infty)$
\begin{equation} \label{E4}
\begin{aligned}
 \|E_4\|_{L^2(\cup B_i)}^2 &= \frac{9\pi^2}{16} \lambda^4 \|\mM * \left(\mu_2 D(\phi)\right) \|_{L^2(\bigcup B_i)}^2 \\
&\leq \frac{9\pi^2}{16}  \lambda^4 \|\mM *  \left(\mu_2 D(\phi)\right) \|_{L^q}^2 \left|\bigcup B_i \right|^{1-\frac{2}{q}}\\
&\leq C \lambda^4 \|\mu_2\|_\infty^2 \lambda^{1-\frac{2}{q}} \| D(\phi)\|_{L^q}^2 .
\end{aligned}
\end{equation}
because $\mM$ is a Calderon-Zygmund operator. The estimate of $E_5$ is more difficult. We shall rely on  \eqref{A1}, and  notably make a crucial use of the following lemma, taken from \cite{GVH} :
\begin{lemme} {\bf \cite[Lemma 2.4]{GVH} \footnote{Only the first inequality is stated in \cite[Lemma 2.4]{}, but a look at the proof shows that it follows from the second one.}} \label{lemmaGVH}

\smallskip
\noindent
Under assumption \eqref{A1}, for all $q \in (1,\infty)$, there exists $C > 0$, such that for all $A_1, \dots, A_n$ in $\text{Sym}_{3,\sigma}(\R)$, 
$$ \sum_i \Big| \sum_{j \neq i} r_n^3 \, \mM(x_i - x_j) A_j \Big|^q \le C \lambda^{q-1}  \sum_i |A_i|^q $$
as well as 
$$  \sum_i \Big| \sum_{j \neq i} r_n^3 \, \dashint \mM(x - x_j) dx A_j \Big|^q \le C \lambda^{q-1}  \sum_i |A_i|^q $$
\end{lemme}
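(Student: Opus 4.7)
\emph{Reduction of the first inequality to the second.} The difference $r_n^3\bigl(\mM(x_i - x_j) - \dashint_{B_i}\mM(x-x_j)\,dx\bigr) = O(r_n^5/|x_i-x_j|^5)$ by Taylor expansion: the first-order term vanishes by the symmetry of $B_i$ about $x_i$, and $\mM$ is smooth of order $-3$ away from the origin. Its contribution to the $q$-th power sum is bounded by $C\lambda^{5q/3}\sum_j|A_j|^q$ using the Schur argument below, and absorbed in $\lambda^{q-1}\sum_j|A_j|^q$ since $5q/3 \ge q-1$. I therefore focus on the averaged (second) inequality.

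\emph{Transfer to a continuous Calderon-Zygmund operator.} Introduce $f := \sum_j A_j\,1_{B_j}$, so that $\|f\|_{L^q}^q = \tfrac{4\pi}{3}r_n^3\sum_j|A_j|^q$. By (A1), $\operatorname{dist}(x_i,B_j)\gtrsim d_n$ for $j\ne i$, and the vanishing first moment $\int_{B_j}(y-x_j)\,dy = 0$ combined with Taylor expansion of $\mM$ gives $\int_{B_j}\mM(x_i-y)\,dy = |B_j|\,\mM(x_i-x_j) + O(r_n^5/|x_i-x_j|^5)$. Averaging over $x\in B_i$ and using the principal-value cancellation $\mathrm{p.v.}\int_{B_i}\mM(x_i-y)\,dy = 0$ (which follows from $\int_{\mathbb{S}^2}\mM\,d\sigma = 0$, directly checked from the explicit formula of $\mM$ recalled in \eqref{meanfield2}), one rewrites the left-hand side of the lemma as $\tfrac{3}{4\pi}\dashint_{B_i}Tf\,dx + R_i$, where $T := \mM\star\cdot$ is the Calderon-Zygmund operator and the residual kernel $\lesssim r_n^5/|x_i-x_j|^5$ has $\ell^{q'}$-row-sum $\lesssim \lambda^{5/3}$ by dyadic annular summation (each annulus $|x_i-x_j|\asymp 2^kd_n$ contains at most $C2^{3k}$ points by (A1), and $\sum_k 2^{3k(1-q')}$ converges for $q'>1$). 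Schur's test then gives $\sum_i|R_i|^q \le C\lambda^{5q/3}\sum_j|A_j|^q$, absorbed in the target.

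\emph{Main estimate and the key obstacle.} Two complementary bounds enter the final step. The Calderon-Zygmund $L^q$-theorem yields $\|Tf\|_{L^q}^q \le C_q\|f\|_{L^q}^q = C_q r_n^3\sum_j|A_j|^q$, which by Jensen and the disjointness of $\{B_i\}$ translates into the baseline $\sum_i|\dashint_{B_i}Tf|^q \le C_q\sum_j|A_j|^q$. Separately, Hölder's inequality with exponent $q'$ applied to $\sum_{j\ne i}(r_n^3|\mM(x_i-x_j)|)^{q'} \le C\lambda^{q'}$ (same annular argument) gives the $\ell^q\!\to\!\ell^\infty$ estimate $\sup_i|\dashint_{B_i}Tf| \le C\lambda\,\|A\|_{\ell^q}$. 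The main obstacle is to combine these into the asymmetric $\ell^q$ bound with prefactor $\lambda^{q-1}$: the naive log-convexity $\sum_i|B_i|^q \le \|B\|_{\ell^\infty}^{q-1}\|B\|_{\ell^1}$ introduces an $n$-dependent factor through $\|B\|_{\ell^1} \le n^{1/q'}\|B\|_{\ell^q}$, which is not sharp enough. My plan to circumvent this is to establish a Cotlar-type pointwise estimate $|\dashint_{B_i}Tf| \le C\lambda\,\tilde M(|A|^{q'})(x_i)^{1/q'}$, where $\tilde M$ is a discrete Hardy-Littlewood maximal function on the separated point set $\{x_j\}$; the corresponding discrete maximal theorem $\sum_i\tilde M(|A|^{q'})(x_i)^{q/q'} \le C\sum_j|A_j|^q$ (valid for $q > 1$ via interpolation and duality, with the strong bound directly valid at $q > q'$, i.e., $q > 2$) then yields $\sum_i|\dashint_{B_i}Tf|^q \le C\lambda^q\sum_j|A_j|^q$, which implies the claim since $\lambda^q \le \lambda^{q-1}$ for $\lambda\in(0,1]$.
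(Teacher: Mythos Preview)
The paper does not give its own proof of this lemma: it is quoted from \cite{GVH} and used as a black box. So there is no ``paper proof'' to compare with; what follows is an assessment of your attempt and of the missing idea.

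Your reduction of the first inequality to the second, and your handling of the diagonal and Taylor remainders via Schur/annular counting, are correct. The observation that $\int_{\mathbb S^2}\mM=0$, hence $\dashint_{B_i}\int_{B_i}\mM(x-y)\,dy\,dx=0$, is also correct (the double integral equals $\int(1_B\star 1_B)\mM$, and $1_B\star 1_B$ is radial). So with $f=\sum_j A_j 1_{B_j}$ you do obtain the clean baseline
\[
\sum_i \Big|\,r_n^3\sum_{j\ne i}\dashint_{B_i}\mM(\cdot-x_j)A_j\,\Big|^q \;\le\; C\sum_j|A_j|^q .
\]

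The gap is, as you say, the factor $\lambda^{q-1}$. Your proposed Cotlar-type pointwise estimate $|\dashint_{B_i}Tf|\le C\lambda\,\tilde M(|A|^{q'})^{1/q'}$ is not an established inequality, and your plan does not explain how to prove it; the classical Cotlar inequality bounds the maximal truncation $T^\ast f$ by $M(Tf)+Mf$, which goes in the wrong direction here. So the proof is genuinely incomplete at this point.

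There is a much simpler route that avoids the obstacle entirely: extract the factor $\lambda$ \emph{before} applying Calder\'on--Zygmund, by working at the separation scale $d_n$ rather than the particle scale $r_n$. Set $y_i=x_i/d_n$, so the $y_i$ are $1$-separated, and use homogeneity:
\[
r_n^3\,\mM(x_i-x_j)=\Big(\frac{r_n}{d_n}\Big)^{3}\mM(y_i-y_j),\qquad \Big(\frac{r_n}{d_n}\Big)^{3}\le C\lambda .
\]
It then suffices to prove the $\lambda$-free bound $\sum_i|\sum_{j\ne i}\mM(y_i-y_j)A_j|^q\le C\sum_j|A_j|^q$. For this your own strategy works without difficulty: take $\tilde f=\sum_j A_j\,\chi(\cdot-y_j)$ with a fixed bump $\chi$ supported in $B(0,\tfrac14)$, apply the continuous $L^q$ bound for $T=\mM\star\cdot$, and compare $\dashint_{B(y_i,1/4)}T\tilde f$ with $\sum_{j\ne i}\mM(y_i-y_j)A_j$. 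The diagonal and Taylor remainders contribute only $O(|A_i|)$ and Schur-controllable terms, which are harmless since the target here has no $\lambda$. Multiplying the resulting inequality by $((r_n/d_n)^3)^q\le C\lambda^q\le C\lambda^{q-1}$ yields the lemma (the second inequality, and hence the first by your reduction). The defect of your choice $f=\sum_j A_j 1_{B_j}$ is precisely that the balls of radius $r_n$ bury the factor $\lambda$ inside the Calder\'on--Zygmund step, where it is then lost.
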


In particular, this lemma can be applied to matrices $A_j = S_j^2$, {\it cf.} \eqref{def_S_i^2}. We find 
\begin{align*}
\sum_i |S_i^2|^q  & \le C \sum_i \Big| \sum_{j \neq i} r_n^3  \dashint_{B_i} \mM(x - x_j) dx D\phi_j \Big|^q \\
& \le C \lambda^{q-1} \sum_i  \big| D\phi_j \big|^q
\end{align*}
where {\em  in this computation and all computations below, the constant $C$ may change from line to line. }
Moreover, by \eqref{A0},  
\begin{equation} \label{application_A0}
 \frac{1}{n} \sum_i  \big| D\phi_j \big|^q  \xrightarrow[n \rightarrow +\infty]{} \int_{\R^3} |D(\phi)|^q(x) \rho(x) dx 
 \end{equation}
so that 
\begin{equation} \label{estimate_S_i^2}
\limsup_n  \frac{1}{n} \sum_i |S_i^2|^q \le C  \lambda^{q-1} ||D(\phi)||_{L^q}^q 
\end{equation}
Besides Lemma \ref{lemmaGVH}, we shall also make use of the following easy generalization of Young's inequality: 
\begin{equation}  \label{general_convolution} 
 \forall q \ge 1, \quad  \sum_{i}   ( \sum_j |a_{ij} b_j| )^q \le  \max\big(\sup_i \sum_j |a_{ij}|,  \sup_j \sum_i |a_{ij}|\big)^q \sum_i |b_i|^q  
 \end{equation}

We now introduce $y_i := x_i n^{-1/3}$ so that $|y_i-y_j|\geq \frac{1}{2}(c+|y_i-y_j|)$ with $c$ the constant appearing in \eqref{A1}. Using decomposition \eqref{def_R_bis} and the homogeneity of each term in this decomposition, we obtain: for all $q \ge 2$, 
\begin{align*}
&  \|E_5\|_{L^2(\cup B_i)}^2   \le \lambda^{1-\frac2q}  \|E_5\|_{L^q(\cup B_i)}^2 \\
 & \leq C \lambda^{1-\frac2q} \Big(\underset{i}{\sum}\int_{B_i} \Big|\underset{j \neq i }{\sum} D \left({V}[S_j^1+S_j^2]\right)\left(\frac{x-x_j}{r_n}\right)  \Big|^q dx \Big)^{\frac{2}{q}}\\
& \leq C \lambda^{1-\frac2q}  \Big(\underset{i}{\sum}|B_1| \Big|\underset{j \neq i }{\sum} r_n^3 \mM (S_j^1+S_j^2)(x_i-x_j)  \Big|^q dx  \Big)^{\frac{2}{q}} + C \lambda^{1-\frac2q} r_n^8  \Big( \underset{i}{\sum}|B_1|  \Big|\underset{j  \neq i }{\sum}\frac{|S_j^1+S_j^2|}{|x_i-x_j|^4} \Big|^q \Big)^{\frac{2}{q}}  \\
& \leq  C \lambda^{1-\frac2q} \Big(\underset{i}{\sum}|B_1| \Big|\underset{j \neq i }{\sum} r_n^3 \mM (S_j^1+S_j^2)(x_i-x_j)  \Big|^q dx  \Big)^{\frac{2}{q}} + C \lambda^{1-\frac2q}{\lambda^{\frac83}} \Big(\underset{i}{\sum}|B_1|  \Big| \underset{j}{\sum}\frac{|S_j^1+S_j^2|}{c+|y_i-y_j|^4} \Big|^q \Big)^{\frac2q} \\
\end{align*}
We can  then apply Lemma \ref{lemmaGVH} to the first term, and apply \eqref{general_convolution} to the second term (together with the fact that $\underset{i}{\sum}\frac{1}{c+|y_i-y_j|^4} $ is uniformly bounded in $j$).  We end up with 
\begin{align*}
 \|E_5\|_{L^2(\cup B_i)}^2   & \leq C \lambda^{1-\frac2q} \,  \left(  |B_1| \lambda^{q-1} \underset{i}{\sum}|S_i^1+S_i^2|^q \right)^{\frac2q}+ C {\lambda^{\frac83}} \lambda^{1-\frac2q}  \Big( \underset{i}{\sum} |B_1|  |S_i^1+S_i^2|^q \Big)^{\frac2q} \\ 
&\leq  C \lambda^{3-\frac{2}{q}} \left( \frac{1}{n} \underset{i}{\sum}|S_i^1+S_i^2|^q \right)^{2/q}
\end{align*}
for any $q \ge 2$, where the last bound comes from a H\"older inequality. It remains to bound $\frac{1}{n} \underset{i}{\sum}|S_i^1+S_i^2|^q$. By \eqref{def_S_i^1}, we have 
$$ |S^1_i|^q \le C \frac{\lambda^q}{|B_1|^q} \left( \int_{B_i} |M\star (\rho D(\phi))| \right)^q \le C \frac{\lambda^q}{|B_1|}  \int_{B_i} |M\star (\rho D(\phi))|^q  $$
so that: 
$$ \frac{1}{n}\sum_i  |S^1_i|^q \le C \lambda^{q-1} \| M\star (\rho D(\phi))\|_{L^q}^q \le  C  \lambda^{q-1} \|D(\phi)\|_{L^q}^q $$
where we used again the $L^q$ continuity of the convolution with $\mM$. Combining this inequality with \eqref{estimate_S_i^2}, and injecting in the bound for $E_5$, we get that for all $q \ge 2$,  
\begin{equation} \label{E5} 
\begin{aligned}
\limsup_n \|E_5\|_{L^2(\cup B_i)}^2   \le C \lambda^{5-\frac{4}{q}}\|D(\phi)\|_{L^q}^2
\end{aligned}
\end{equation}

We now turn to $E_3$. We use the fact that $D(R[A])(x) = O(|A||x|^{-5})$.  We find
\begin{align*}
\|E_3\|_{L^2(\cup B_i)}^2   & \leq C |B_1| \underset{i}{\sum} \Big|\lambda^{\frac53} \underset{j\neq i}{\sum} \frac{|D\phi_j|}{(c+|y_i-y_j|)^5} \Big|^2\\
&\leq C |B_1|  \lambda^{\frac{10}{3}} \sum_i |D\phi_j|^2 \
\end{align*}
where the last line follows from \eqref{general_convolution}. Using again \eqref{application_A0},  we get eventually
\begin{equation} \label{E3} 
\limsup_n \|E_3\|_{L^2(\cup B_i)}^2   \le C \lambda^{\frac{13}{3}} \|D(\phi)\rho^{\frac12}\|_{L^2}^2 \le C  \lambda^{\frac{13}{3}}   \|D(\phi)\|_{L^q}^2 \quad \forall q \ge 2
\end{equation}
We recall now the expressions of  $E_1$ and $E_2$ on $B_i$: 
\begin{eqnarray*}
E_1(x)&=&\displaystyle{-\frac{15}{8\pi} \lambda  \left( \int_{\mathbb{R}^3} \mM(x-y) D(\phi)(y) \rho(y) dy - \dashint_{B_i} \int_{\mathbb{R}^3} \mM(z-y)  D(\phi)(y) \rho(y) dy dz \right)}, \\
E_2(x)&=&\displaystyle{\frac{15}{8\pi}\lambda \left( \frac{1}{n}\underset{j \neq i}{\sum} \mM(x-x_j)D\phi_j-\dashint_{B_i} \frac{1}{n}\underset{j \neq i}{\sum} \mM(z-x_j)D\phi_j dz\right)}.
\end{eqnarray*}
We claim first that 
\begin{equation} \label{claim_E1}
\limsup_n  ||E_1||_{L^2(\cup B_i)}  = 0 
\end{equation}
To prove \eqref{claim_E1}, the idea is to regularize  $D(\phi) \rho$ using a convolution with a mollifier $\chi_\eta$.  Denoting $|| \, ||_{0,\mu}$ the H\"older semi-norm of exponent $\mu$, we get 
\begin{align*} 
||(D(\phi) \rho) \star \chi_\eta||_{0,\mu}  &  \leq C \|(D(\phi)\rho) \star  \chi_\eta \|_{L^\infty}^{1-\mu} \,    \|(D(\phi)\rho) \star \na  \chi_\eta \|_{L^\infty}^{\mu}  \\
& \le \frac{C}{\eta^{\mu + \frac3q}} \|D(\phi)\rho\|_{L^q} 
\end{align*}
for all $q \ge 1$. Hence we get using the fact that $\mM$ is a Zygmund-Calderon kernel: for all $q \geq 2$, any $\mu \in (0,1)$, 
\begin{align*}
\underset{i}{\sum} \int_{B_i} |E_1(x)|^2 dx & \le C  \lambda^2 \underset{i}{\sum} \int_{B_i} | \mM \star (D(\phi) \rho-D(\phi) \rho \star \chi_\eta )|^2 + C \lambda^2 n|B_1|r_n^{2\mu} \|(D(\phi) \rho)*\chi_\eta \|_{0,\mu}^2\\
&\leq \lambda^{3-\frac{2}{q}}\|D(\phi)\rho-(D(\phi) \rho)*\chi_\eta )\|_q^2+ \lambda^3 \frac{r_n^{2\mu}}{\eta^{2\mu+6/q}} \|D(\phi)  \rho\|_{L^q}^2 
\end{align*}
which vanishes when taking the $\limsup$ in $n$ for fixed $\eta$ and then taking the limit $\eta \to 0$.

\medskip
Eventually, we bound $E_2$ by:
\begin{align*}
||E_2||^2_{L^2(\cup B_i)} & \le C \frac{\lambda^2}{n^2} \sum_i \int_{B_i}  \Big| \sum_{j \neq i} \left( \sup_{z \in B_i}  |\na \mM(z - x_j)| \right) \, r_n \,  
|D\phi_j|  \Big|^2 \\
& \le C \frac{\lambda^3}{n^3}   \sum_i    \Big|  \sum_{j \neq i}  \frac{r_n}{|x_i - x_j|^4}  |D\phi_j|  \Big|^2 \\
& \le C  \lambda^{\frac{11}{3}} \frac{1}{n}   \sum_i    \Big|  \sum_{j}  \frac{1}{(c+|y_i - y_j|)^4}  |D\phi_j|  \Big|^2 \\
& \le C   \lambda^{\frac{11}{3}} \frac{1}{n}\sum_{i}   |D\phi_i|^2 
\end{align*}
 where the last line comes from \eqref{general_convolution}. We end up with 
 \begin{equation} \label{E2}
\limsup_n  ||E_2||^2_{L^2(\cup B_i)}  \le C \lambda^{\frac{11}{3}} \int_{\R^3} |D(\phi)|^2 \rho \le C  \lambda^{\frac{11}{3}} \|D(\phi)\|_{L^q}^2 \quad \forall q \ge 2
 \end{equation}
 
 \medskip
 By the combination of \eqref{claim_E1}-\eqref{E2}-\eqref{E3}-\eqref{E4}-\eqref{E5}, we find 
 $$ \limsup_n \|\na \tilde \psi^2_n\|_{L^2(\cup B_i)} \le C_q \left( \lambda^{\frac52 - \frac2q}  + \lambda^{\frac{11}{6}} \right) ||D(\phi)||_{L^q} \quad \forall q \ge 2 $$
 This inequality, together with  \eqref{estimation1} and  \eqref{phi2n_psi}, concludes the proof of Proposition \ref{prop_estimate_phi2n}.
%
%

\subsection{Estimate of $\phi_n^1$}
The goal of this paragraph is to show that 
\begin{prop} \label{prop_estimate_gphi1n}
For all $q \ge 1$, 
$$ \limsup_n \Big| \int_{\R^3} g \phi_n^1  \Big| \le C \lambda^{\frac73} ||D(\phi)||_{L^q}$$
\end{prop}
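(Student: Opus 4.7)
The plan is to exploit duality with the Stokes system. First, introduce the Stokes potential $u_g\in \dot{H}^1\cap L^6$ solving $-\Delta u_g + \nabla p_g = g$, $\div u_g = 0$ in $\R^3$. Since $g\in L^{3+\eps}$, elliptic regularity gives $u_g\in W^{2,3+\eps}_{\loc}$, so $D(u_g)$ is continuous and bounded on the compact set containing $\supp \rho \cup \supp D(\phi)$. Observe that $\phi_n^1$ is globally divergence-free in $\R^3$: the convolution $\nabla \mathcal{U}\star(\cdot)$ is div-free thanks to $\partial_i \mathcal{U}_{ij}=0$, and each $\mathcal{V}[A]$ is div-free because $A$ is trace-free. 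Testing $u_g$ against $\phi_n^1$, integrating by parts, and then applying the identity \eqref{eq_phi1n} (the pressure $q_n^1$ drops out because $u_g$ is div-free, and all tensors in $\tau_n$ are symmetric), one obtains
\[
\int g \cdot \phi_n^1 \;=\; -\int \tau_n : D(u_g),
\]
where $\tau_n$ is the symmetric tensor inside the outer $\div$ in \eqref{eq_phi1n}.

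Next, split $\tau_n = 5\lambda\, T_n^{(1)} + 2\lambda^2\, T_n^{(2)}$ along the order in $\lambda$ and show that \emph{both} $\int T_n^{(j)} : D(u_g) \to 0$ as $n\to\infty$. For $T_n^{(1)} = \rho D(\phi) - \frac{1}{n}\sum_i \frac{1}{|B_i|} 1_{B_i} D\phi_i$, the identity $\frac{1}{n|B_i|}=\frac{1}{\lambda}$ reduces the discrete piece to $\frac{1}{n}\sum_i D\phi_i : \dashint_{B_i} D(u_g)$; continuity of $D(u_g)$ replaces the average by the point value $D(u_g)(x_i)$ with vanishing error as $r_n\to 0$, and \eqref{A0} then sends the sum to $\int \rho\, D(\phi):D(u_g)$, exactly cancelling the first term. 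For $T_n^{(2)}$, the same replacement of averages by point values identifies the sum of the two mean-field corrections in $T_n^{(2)}$ with (up to sign) $\mu^{-1}\langle \mu_{2,n}, F\rangle$ applied to $F(x,y) = D(u_g)(x)\otimes D(\phi)(y)$ (tensor-contracted against $\mathcal{M}(x-y)$). Assumption \eqref{A2} then sends this quantity to $-\int \mu_2\, D(\phi):D(u_g)$, which cancels the explicit $\mu_2 D(\phi)$ piece of $T_n^{(2)}$.

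Combining both steps yields $\int g\cdot\phi_n^1 \to 0$, which is \emph{a fortiori} bounded by $C\lambda^{7/3}\|D(\phi)\|_{L^q}$ for any $q\ge 1$. The main technical obstacle I foresee is that $D(u_g)$ is only H\"older-continuous and not $C^1$, so the test function $F$ above is not directly admissible in the distributional convergence \eqref{A2}--\eqref{def_mu2n_bis}. The standard fix is to mollify: replace $u_g$ by $u_g \star \chi_\delta$ and control the commutator error uniformly in $n$ using the $L^p$-continuity of convolution with the Calder\'on--Zygmund kernel $\mathcal{M}$ together with the compact supports of $\rho$ and $D(\phi)$, then send $n\to\infty$ first and $\delta\to 0$ afterwards.
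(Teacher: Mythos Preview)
Your overall strategy coincides with the paper's: introduce $u_g$, integrate by parts against \eqref{eq_phi1n}, split into the $O(\lambda)$ and $O(\lambda^2)$ pieces, kill the first via \eqref{A0}, and relate the second to the mean-field distribution $\mu_{2,n}$ so that \eqref{A2} applies. The paper also reduces to smooth $g$ by density (rather than mollifying $u_g$), which disposes of your ``main technical obstacle'' more cheaply; your mollification route would work too.

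There is, however, a genuine gap in your treatment of $T_n^{(2)}$. You write that ``the same replacement of averages by point values'' identifies the discrete sum with $\langle\mu_{2,n},F\rangle$. But after testing against $D(u_g)$, $T_n^{(2)}$ contains \emph{two} kinds of averages over $B_i$: averages of $D(u_g)$, and averages of the kernel $\mM(\cdot-x_j)$. The first replacement does have vanishing error (continuity of $D(u_g)$, balls of radius $r_n\to 0$). The second does \emph{not}: for nearest neighbours $|x_i-x_j|\sim n^{-1/3}\sim r_n\lambda^{-1/3}$, so the ratio $r_n/|x_i-x_j|$ stays bounded below by $\sim\lambda^{1/3}$ uniformly in $n$, and the Taylor remainder
\[
\Bigl|\dashint_{B_i}\mM(z-x_j)\,dz-\mM(x_i-x_j)\Bigr|\;\lesssim\;\frac{r_n}{|x_i-x_j|^4}
\]
does not improve. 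Summing via \eqref{general_convolution} under \eqref{A1} gives
\[
\frac{1}{n^2}\sum_{i\neq j}\frac{r_n}{|x_i-x_j|^4}\,|D\phi_j|\;\le\;\frac{C\lambda^{1/3}}{n}\sum_i|D\phi_i|\;\xrightarrow[n\to\infty]{}\;C\lambda^{1/3}\int_{\R^3}|D(\phi)|\rho,
\]
which is $O(\lambda^{1/3})$, not $o(1)$. Your conclusion $\int g\cdot\phi_n^1\to 0$ is therefore unjustified. In fact this residual term is exactly what produces the exponent $\frac73$ in the statement: the paper isolates it explicitly (their $\tilde R_{2,1}$), bounds it as above by $C\lambda^{1/3}\|D(\phi)\|_{L^q}$ with $C$ controlled by $\|D(u_g)\|_{L^\infty(K)}\lesssim\|g\|_{L^{3+\eps}}$, and shows the remaining contributions to $T_n^{(2)}$ vanish as $n\to\infty$. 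Multiplying by the prefactor $2\lambda^2$ then yields the claimed $C\lambda^{7/3}\|D(\phi)\|_{L^q}$. Once you insert this estimate for the kernel--average replacement, your argument becomes complete and essentially identical to the paper's.
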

The main estimate \eqref{main_estimate} is then an easy consequence of Propositions \ref{prop_estimate_gphi2n} and \ref{prop_estimate_gphi1n}, which, as explained before \eqref{main_estimate}, concludes the proof of Theorem \ref{thm}. 

\medskip
The derivation of Proposition  \ref{prop_estimate_gphi1n} will rely strongly on our assumption \eqref{A2}. At first, let us remind that $\phi_n$ is bounded uniformly in $n$ in $\dot{H}^1$, and so is $\phi_n^2$ by Proposition \eqref{prop_estimate_phi2n}. It follows that   that $\phi_n^1$ is bounded uniformly in $n$ in $\dot{H}^1$. Hence, by a density argument, it is enough to establish the inequality in Proposition \eqref{prop_estimate_gphi1n} for a $C^\infty_c$ field $g$, as 
 long as the constant $C$ at the right-hand side only involves $||g||_{L^{3+\eps}}$. From now on, we assume $g$ to be smooth and compactly supported. 
 
 \medskip
 We introduce again the solution $(u_g, p_g)$ of \eqref{eq_ug}. As $g$ is smooth, so is $u_g$. Moreover, by elliptic regularity and Sobolev imbedding, we have 
for a compact $K$ containing all the balls, 
\begin{equation} \label{link_ug_g}
||D(u_g)||_{L^\infty(K)} \lesssim \| D(u_g)||_{W^{1,3+\eps}(K)} \lesssim ||g||_{L^{3+\eps}}.
\end{equation}
 
 With \eqref{eq_phi1n}, we compute: 
\begin{align*}
\int_{\mathbb{R}^3} \phi_n^1 \cdot g = & \int_{\mathbb{R}^3} \phi_n^1 \cdot (- \Delta u_g + \nabla p_g)  = \int_{\mathbb{R}^3} D(\phi_n^1) : D(u_g) \\
 = & - \int_{\mathbb{R}^3} \div (\sigma(\phi_n^1,q_n^1)) \cdot u_g,\\
= &  - 5 \lambda   \int_{\mathbb{R}^3} \bigg(  \rho D(\phi) -  \frac{1}{n}\underset{i}{\sum} \frac{1}{|B_i|}1_{B_i}D\phi_i \bigg) : D(u_g)\\
& - 2 \lambda^2   \int_{\mathbb{R}^3} \bigg( \mu_2 D(\phi) - \frac{75}{16\pi}  \frac{1}{n^2}\underset{i}{\sum} \frac{1}{|B_i|} 1_{B_i} \underset{ j \neq i }{\sum}\dashint_{B_i} \mM(x-x_j)D\phi_j dx    \\
&+   \frac{75}{16 \pi} \frac{1}{n} \underset{i}{\sum} \frac{1}{|B_i|}1_{B_i} \dashint_{B_i} \int_{\mathbb{R}^3}  \mM(x-y)D(\phi)(y) \rho(y) dy dx \Big)\bigg)  : D(u_g)
\end{align*}
We set 
\begin{align*}
\overline{\mu}_{2,n}  := & \frac{75}{16\pi} \Big( \frac{1}{n^2}\sum_{i \neq j}  \Big(\dashint_{B_i} \mM(\cdot-x_j) \Big)  \frac{1}{|B_i|} 1_{B_i}(x) dx \,  \delta_{y_j}(dy) \\
& -  \,   
\sum_{i}  \frac{1}{n} \Big( \dashint_{B_i}  \mM(\cdot-y) \Big) \frac{1}{|B_i|} 1_{B_i}(x) dx \, \rho(y) dy  \Big)
\end{align*} 
See \eqref{def_mu2n} for comparison. We have 
\begin{align*}
\int_{\mathbb{R}^3} \phi_n^1 \cdot g   = &  - 5 \lambda  \int_{\mathbb{R}^3} \Big(\rho  D(\phi) -  \frac{1}{n} \sum_{i} \frac{1}{|B_1|} 1_{B_i}  D\phi_i \Big) : D(u_g)  \\
& - 2 \lambda^2   \Big\langle \big( \mu_2 \delta_{x=y} -  \overline{\mu}_{2,n}  \big),  D(\phi)(y) \otimes D(u_g)(x) \Big\rangle  =: - 5 \lambda R_1 + 2 \lambda^2 R_2 
\end{align*}
with the notation that for $M$ a distribution over $\R^3 \times \R^3$ with values in $\text{Sym}(\text{Sym}_{3,\sigma}(\R))$: 
$$ \langle M , S \otimes S' \rangle :=  \langle M_{ijkl} , S_{ij} \otimes S'_{kl} \rangle $$ 

\medskip
From assumption \eqref{A0}, it easily follows that 
$$\lim_n R_1 = 0$$ 
Then, we write 
$$ R_2 =      \Big\langle \big( \mu_2 \delta_{x=y} - \mu_{2,n}  \big),  D(\phi)(y) \otimes D(u_g)(x) \Big\rangle + \tilde R_2 $$
with 
$$ \tilde R_2  =  \Big\langle \big( \mu_{2,n} -  \overline{\mu}_{2,n}   \big),  D(\phi)(y) \otimes D(u_g)(x) \Big\rangle. $$
The first term at the right-hand side goes to zero as $n \rightarrow +\infty$, by assumption \eqref{A2}. For the remainder, we decompose it as 
\begin{align*}
 \tilde R_2  = &  \frac{75}{16\pi n^2}  \sum_{i \neq j}  \bigg( \Big( \dashint_{B_i} \mM(\cdot - x_j)\Big) -   \mM(x_i - x_j) \bigg) D\phi_j  :  \Big( \dashint_{B_i}  D(u_g) \Big)  \\
  + &  \frac{75}{16\pi n^2} \sum_{i \neq j}  \mM(x_i - x_j)  D\phi_j  :  \bigg( \Big( \dashint_{B_i}  D(u_g) \Big)   -  D(u_g)(x_i) \bigg) \\
  - & \frac{75}{16\pi} \int_{\R^3} \Big( \mM \star (D(\phi) \rho) \Big)(x) \bigg( \frac{1}{n}  \sum_i \frac{1}{|B_i|} \Big( \dashint_{B_i}  D(u_g) \Big) 1_{B_i}  \: - \: D(u_g) \rho \bigg)(x)  \, dx =:  \tilde R_{2,1} +  \tilde R_{2,2}  + \tilde R_{2,3} 
\end{align*}
The first term is bounded by 
\begin{align*}
 |\tilde R_{2,1}|  & \le \frac{75}{16\pi n^2}  \sum_{i \neq j}  \sup_{z \in B_i} |\na \mM(z - x_j)| \, r_n \,   |D\phi_j| \, \|D(u_g)\|_{L^\infty(K)} \\
 & \le  \frac{C}{n^2} \sum_{i \neq j}  \frac{r_n}{|x_i - x_j|^4}  |D\phi_j|   \le  \frac{C \lambda^{\frac13}}{n}  \sum_{i \neq j} \frac{1}{(c+|y_i - y_j|)^4}  |D\phi_j| \\
&  \le \frac{C \lambda^{\frac13}}{n}  \sum_i   |D\phi_i|  
  \end{align*}
  where the last line follows from \eqref{general_convolution}.  
  By \eqref{A0}, 
  $$ \limsup_n  |\tilde R_{2,1}|  \le C \lambda^{\frac13} \int_{\R^3} |D(\phi)| \rho \le C  \lambda^{\frac13}  \|D(\phi)\|_{L^q} \quad \forall q \ge 1. $$
  Note that the constant $C$ may be chosen so that it depends on $g$ only through $\|g\|_{L^{3+\eps}}$, using \eqref{link_ug_g}.
 The second term is bounded by 
 \begin{align*}
 |\tilde R_{2,2}|  & \le  \frac{75}{16\pi n^2}  \sum_{i} \Big| \sum_{j \neq i} \mM(x_i - x_j) D\phi_j| \, r_n \|\na^2 u_g\|_{L^\infty} \\
 & \le \frac{C }{n^2 r_n^2}  \sum_{i}  \Big| \sum_{j \neq i} r_n^3 \mM(x_i - x_j) D\phi_j|   \le \frac{C }{n^{\frac32} r_n^2}    \left(  \sum_{i} \Big| \sum_{j \neq i} r_n^3 \mM(x_i - x_j) D\phi_j \Big|^2  \right)^{1/2} \\
 & \le  \frac{C }{n r_n^2}   \left( \frac{1}{n}  \sum_{i}  |D\phi_i|^2\right)^{1/2} \le \frac{C_\lambda}{n^{\frac13}}  \Bigl( \int_{\R^3} |D(\phi)|^2 \rho \Big)^{1/2}
 \end{align*}
where the fourth, resp. fifth  inequality is a consequence of Lemma \ref{lemmaGVH}, resp. \eqref{A0}. Hence, 
$$  \limsup_n  |\tilde R_{2,2}| = 0. $$
As regards the last term, we use the fact that, under \eqref{A0}, 
$$ \frac{1}{n} \sum_i \frac{1}{|B_i|}  \Big( \dashint_{B_i}  D(u_g) \Big) 1_{B_i}  \: \xrightarrow[n \rightarrow +\infty]  \: D(u_g) \rho \quad \text{ weakly* in } \: L^\infty$$
See \cite[Lemma 5]{GVRH} for a proof. Hence,
$$  \limsup_n  |\tilde R_{2,3}| = 0. $$
Proposition   \ref{prop_estimate_gphi1n} follows. 

 \section{Discussion of the mean-field limit} \label{sec_A2}
 We come back in this final section to the main assumption \eqref{A2}: we discuss examples of particle distributions $(x_i)$ for which $\mu_{2,n}$ converges as  in \eqref{A2},  and discuss how to compute the limit $\mu_2$, that is the $O(\lambda^2)$ correction  to the effective viscosity. This discussion is closely related to the analysis performed in \cite{GVH}, and relies on results established there. 
 
 \medskip
 At first, one can notice that the sequence of compactly supported distributions $\mu_{2,n}$ defined in \eqref{def_mu2n} is bounded with respect to $n$. More precisely: 
 \begin{lemme}
There exists $C > 0$, such that  for any  smooth  $F = F(x,y)$,  for any $n$, 
  $$ |\langle \mu_{2,n} , F \rangle| \le C \|F\|_{C^1(K \times K)}  $$
  where $K$ is any fixed convex compact containing $x_1, \dots x_n$ for all $n$. 
 \end{lemme}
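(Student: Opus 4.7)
The plan is to mirror the decomposition \emph{already used} by the paper to define $\int_{\R^3}\int_{\R^3} \mM(x-y)F(x,y)\rho(x)\rho(y)\,dx\,dy$. Setting $G(x,y) := F(x,y) - F(y,y)$ so that $|G(x,y)| \le \|\nabla F\|_{\infty} |x-y|$, split $\langle \mu_{2,n}, F\rangle = \frac{75\mu}{16\pi}(D_1 + D_2)$ where
\[
D_1 := \frac{1}{n^2}\sum_{i\neq j}\mM(x_i-x_j) G(x_i,x_j) - \int_{\R^3}\int_{\R^3}\mM(x-y)G(x,y)\rho(x)\rho(y)\,dx\,dy,
\]
\[
D_2 := \frac{1}{n^2}\sum_{i\neq j}\mM(x_i-x_j) F(x_j,x_j) - \int_{\R^3}(\mM\star\rho)(y)\,F(y,y)\rho(y)\,dy.
\]
This mirrors the paper's rigorous definition of the continuous integral, so that the individually ill-defined terms cancel.

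The estimate on $D_1$ is elementary. Since $|\mM(z)| \le C|z|^{-3}$ and $|G(x,y)| \le \|\nabla F\|_\infty |x-y|$, every summand/integrand in $D_1$ is controlled by $C\|\nabla F\|_\infty / |x-y|^2$, which is locally integrable on $\R^6$. For the continuous piece, $\rho \in L^\infty$ with compact support in $\overline{\mO}$ immediately yields a bound by $C\|\nabla F\|_\infty$. For the discrete piece, a standard shell-counting argument based on \eqref{A1} — the shell of radii $k n^{-1/3}$ and $(k+1) n^{-1/3}$ around $x_j$ contains at most $Ck^2$ particles — gives $\frac{1}{n^2}\sum_{i\neq j} |x_i - x_j|^{-2} \le C$ uniformly in $n$. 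Hence $|D_1| \le C\|\nabla F\|_\infty$.

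For $D_2$, the continuous part is handled by the $L^p$-continuity of the Calder\'on-Zygmund operator $h \mapsto \mM\star h$: Cauchy-Schwarz gives $\bigl|\int (\mM\star\rho)(y)F(y,y)\rho(y)\,dy\bigr| \le \|\mM\star\rho\|_{L^2(K)}\|\rho\|_{L^2(K)}\|F\|_\infty \le C \|F\|_\infty$. The discrete half is the crux, because the naive pointwise bound only gives $C(\log n)\|F\|_\infty$, so one must exploit the cancellation of $\mM$. The plan is to apply Lemma~\ref{lemmaGVH} to coefficients $A_j := F(x_j,x_j)\,A_0$ (with $A_0 \in \text{Sym}_{3,\sigma}(\R)$ an arbitrary fixed matrix) with some exponent $q > 1$, which yields an $\ell^q$ bound on $\bigl\{ \sum_{j\neq i} r_n^3 \mM(x_i-x_j) A_j \bigr\}_{i}$. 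Applying H\"older's inequality in $i$ and using $r_n^3 \sim \lambda/n$ then gives
\[
\Big|\frac{1}{n^2}\sum_{i\neq j}\mM(x_i-x_j)F(x_j,x_j)A_0 \Big| \le C_{q,\lambda}\, |A_0|\,\|F\|_\infty,
\]
and therefore $|D_2| \le C\|F\|_\infty$. Combining with the bound on $D_1$ yields the claim.

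The main obstacle is precisely the discrete term in $D_2$: the critical $|x|^{-3}$ singularity of the Calder\'on-Zygmund kernel $\mM$ renders every naive estimate logarithmically divergent, so one genuinely needs the refined cancellation encoded in Lemma~\ref{lemmaGVH}; all other bounds are of elementary flavor.
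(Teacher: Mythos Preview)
Your proof is correct. The treatment of $D_1$ and of the continuous half of $D_2$ matches the paper's; the only substantive difference lies in the discrete sum $\frac{1}{n^2}\sum_{i\neq j}\mM(x_i-x_j)F(x_j,x_j)$. There the paper proceeds by a direct averaging argument: it replaces $\mM(x_i-x_j)$ by the double mean $\dashint_{B(x_i,s_n)}\dashint_{B(x_j,s_n)}\mM(x-y)\,dx\,dy$ with $s_n \sim n^{-1/3}$, controls the two commutator errors via the pointwise bound $|\na\mM(z)|\lesssim |z|^{-4}$ (which yields a summable $(c+|y_i-y_j|)^{-4}$), and bounds the remaining term as a genuine convolution $(\chi(\cdot/s_n)\mM)\star\sum_i 1_{B(x_i,s_n)}$ using the $L^2$ continuity of the Calder\'on--Zygmund operator. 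You instead invoke Lemma~\ref{lemmaGVH} with $A_j=F(x_j,x_j)A_0$ and close by H\"older in $i$; since $r_n^3=\frac{3\lambda}{4\pi n}$ one indeed gets $\bigl|\frac{1}{n^2}\sum_{i\neq j}\mM(x_i-x_j)F(x_j,x_j)A_0\bigr|\le C_{q,\lambda}|A_0|\,\|F\|_\infty$ uniformly in $n$. Your route is shorter because it recycles a lemma already imported from \cite{GVH}; the paper's route is more self-contained and makes the underlying mechanism (regularization to a continuous CZ convolution) explicit. Conceptually the two are the same: the proof of Lemma~\ref{lemmaGVH} in \cite{GVH} is precisely this averaging trick.
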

\begin{proof} We need to bound 
 \begin{align*}
  \frac{1}{n^2} \sum_{i \neq j} \mM(x_i-x_j) F(x_i,x_j) &  =      \frac{1}{n^2} \sum_{i \neq j} \mM(x_i-x_j) F(x_j,x_j)  + \frac{1}{n^2} \sum_{i \neq j} \mM(x_i-x_j) \Big( F(x_i,x_j) -  F(x_j,x_j) \Big)\\
  & =: I_{1,n} +  I_{2,n}
  \end{align*} 
  As $\mM$ is homogeneous of degree $-3$ and $|F(x_i,x_j) -  F(x_j,x_j)| \le ||\na F||_{L^\infty(K \times K)} |x_i - x_j|$, 
  $$  |I_{2,n}|  \le \frac{C}{n^2}  ||\na F||_{L^\infty} \sum_{i \neq j} \frac{1}{|x_i - x_j|^2} $$
  We introduce again $y_i := x_i n^{-1/3}$ so that $|y_i-y_j|\geq \frac{1}{2}(c+|y_i-y_j|)$ with $c$ the constant appearing in \eqref{A1}. We end up with 
  $$  |I_{2,n}| \le \frac{C}{n^2} n^{\frac23}  ||\na F||_{L^\infty} \sum_{i,j} \frac{1}{(c + |y_i - y_j|)^2} \le C   ||\na F||_{L^\infty},  $$
  using that for each $i$, under assumption \eqref{A1}, 
  $$ \sum_{j} \frac{1}{(c + |y_i - y_j|)^2}  \le C \int_{n^{1/3} K} \frac{1}{(c + |y_i - z|)^2} dz \le C n^{\frac13}     $$
  To bound $I_{1,n}$, we introduce $s_n := c/(4 n^{\frac13})$, where $c$ is again the constant appearing in \eqref{A1}. We use the splitting: 
 \begin{align*}
  I_{1,n} & =   \frac{1}{n^2} \sum_{i \neq j}   \Big( \mM(x_i-x_j) -  \dashint_{B(x_j, s_n)}\mM(x_i - y) dy \Big)  F(x_j,x_j) \\
  & +  \frac{1}{n^2} \sum_{i \neq j}  \Big(   \dashint_{B(x_j, s_n)}\mM(x_i - y) dy  -  \dashint_{B(x_i, s_n)} \dashint_{B(x_j, s_n)}\mM(x - y)  dx dy  \Big) F(x_j,x_j)\\
  & +   \frac{1}{n^2} \sum_{i \neq j}  \dashint_{B(x_i, s_n)} \dashint_{B(x_j, s_n)}\mM(x - y)  dx dy F(x_j,x_j) \: =: \: J_{1,n} + J_{2,n} + J_{3,n} 
  \end{align*}
  We find 
  \begin{align*} 
  |J_{1,n}| & \le \frac{C}{n^2}   \sum_{i \neq j}  \sup_{z \in B(x_j, s_n)} \big|\na M(x_i - \cdot)\big| \, s_n  \, \|F\|_{L^\infty} \\
 & \le     \frac{C s_n}{n^2}   \sum_{i \neq j}  \frac{1}{|x_i - x_j|^4} \|F\|_{L^\infty}  \\
 & \le  \frac{C}{n}    \sum_{i \neq j}   \frac{1}{(c+|y_i - y_j|)^4}  \|F\|_{L^\infty} \le C \|F\|_{L^\infty}.
 \end{align*}
 Similarly, we find $|J_{2,n}| \le C$. Eventually, let  $\chi$ is a truncation which is zero on $B(0,1)$ and one outside $B(0,2)$. The last term can be written 
\begin{align*}
  J_{3,n} & =  \frac{1}{n^2}  \int_{\R^3} \Big( \big( \chi \big( \frac{\cdot}{s_n}\big) \mM \big)  \star \sum_{i} 1_{B(x_i,s_n)} \Big) \,   \sum_{j} F(x_j,x_j) 1_{B(x_j,s_n)} 
    \end{align*}
  so that 
  \begin{align*}
 |J_{3,n}| & \le  \frac{C}{n^2}  || \sum_{i} 1_{B(x_i,s_n)}||_{L^2}   ||  \sum_{j} F(x_j,x_j) 1_{B(x_j,s_n)}||_{L^2} \le C ||F||_{L^\infty} 
  \end{align*}
  using the Calderon-Zygmund theorem. This concludes the proof. 
\end{proof}
Combining the previous lemma with the density of $\text{span}\big( \{ \varphi \otimes \tilde \varphi, \: \varphi, \tilde \varphi  \in C^\infty(K) \}\big)$ in $C^\infty(K \times K)$, we deduce that for $\mu_2 \in L^\infty\big(\R^3, \text{Sym}\big(\text{Sym}_{3,\sigma}(\mathbb{R})\big)\big)$,  assumption \eqref{A2} is satisfied if and only if for all smooth $\varphi$, $\tilde \varphi$: 
 \begin{equation} 
\langle \mu_{2,n} , \varphi \otimes \tilde \varphi \rangle \:  \rightarrow \:  \int_{\R^3} \mu_2(x) \varphi(x) \tilde\varphi(x) dx 
\end{equation}
or equivalently if and only if for all $S \in   \text{Sym}_{3,\sigma}(\mathbb{R})$, for all  smooth $\varphi$, $\tilde \varphi$: 
$$ \frac{75\mu}{16\pi} \Big( \frac{1}{n^2} \sum_{i \neq j} g_S(x_i - x_j) \varphi(x_i) \tilde \varphi(x_j) -  \int_{\R^3} g_S(x-y) \varphi(x) \tilde \varphi(y) dx dy \Big)   \:  \rightarrow  \:   \int_{\R^3} \big(\mu_2(x)S : S\big) \varphi(x) \tilde\varphi(x) dx   $$
 where 
 \begin{equation}
  g_S(x) = \mM(x) S : S, 
\end{equation}
 {\it cf.} \eqref{meanfield2}. As $g_S$ is even, this is the same as : for all $S \in   \text{Sym}_{3,\sigma}(\mathbb{R})$,  for all smooth $\varphi$
\begin{equation} \label{reformulation_A2}
\frac{75\mu}{16\pi} \Big( \sum_{i \neq j} g_S(x_i - x_j) \varphi(x_i) \varphi(x_j) -   \int_{\R^3} g_S(x-y) \varphi(x)  \varphi(y) dx dy \Big)   \:  \rightarrow  \:  \int_{\R^3} \big(\mu_2(x)S : S\big) |\varphi(x)|^2  dx 
\end{equation}
Of course, the existence of a non-trivial limit at the right-hand side of  \eqref{reformulation_A2} is directly related to the singularity of $g_s$ at zero. If $g_s$ were smooth or not too singular, one could show that the limit at the right-hand side is zero. 

\medskip
Our first goal is to understand configurations of points $(x_i)$ for which the limit as $n \rightarrow +\infty$ of $\frac{1}{n^2} \sum_{i \neq j} g_S(x_i - x_j) \varphi(x_i) \varphi(x_j)$ exists. This is closely related to the study in \cite{GVH}, in which the special case $\varphi = 1$ is investigated. This study relies on ideas from homogenization theory, together with the notion of renormalized energy, described in \cite{MR3309890} in the context of Coulomb gases.  We shall explain how the approach in \cite{GVH} adapts to a general $\varphi$. One proceeds in several steps. 

\medskip
\noindent
{\bf Step 1. Expression of the mean field as a (renormalized) energy.}

\medskip
 Proceeding as in \cite[Lemma 3.1]{GVH}, one can restrict to the case where 
\begin{equation*}
x_i \in \mO, \quad \forall 1 \le i \le n 
\end{equation*}
Under this assumption, proceeding as in \cite[Proposition 3.3]{GVH}, one shows that \eqref{reformulation_A2}, and thus \eqref{A2}, amounts to: for all smooth $\varphi$, for all $S \in \text{Sym}_{3,\sigma}(\R)$, 
\begin{equation} \label{reformulation_A2_2}
\lim_n. \mu \,   \mW_n[\varphi]  =  \int_{\R^3} \big(\mu_2(x)S : S\big) |\varphi(x)|^2  dx, 
\end{equation}
where 
\begin{equation} \label{def_Wn}
\mW_n[\varphi]  := \frac{75}{16\pi} \int_{\R^6\setminus\text{Diag}}  g_S(x - y) \varphi(x) \Big(  \rho_n(dx) -  \rho(x) dx \Big) \varphi(y)  \Big(  \rho_n(dy) -  \rho(y) dy \Big) 
\end{equation} 
To investigate the convergence of $\mW_n[\varphi]$,  the main idea is then to associate to this quadratic quantity an energy. This idea is based on the following
\begin{lemme} {\bf (\cite[Lemma 2.2]{GVH})}
For any $f \in L^2(\R^3)$, 
$$ \int_{\R^6} g_S(x-y) f(x) f(y) dx dy = - \frac{16\pi}{3} \int_{\R^3} |D(u_S)|^2 $$
where $u_S$ is the solution of the Stokes equation 
$$ -\Delta u_S + \na q_S = \div(Sf) = S \na f, \quad \div u_S = 0 \quad \text{ in  } \R^3. $$
\end{lemme}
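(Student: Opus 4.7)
The plan is to turn the bilinear form on the left into the squared $L^2$ norm of $D(u_S)$ on the right via the Green function representation of $u_S$, together with the tensor identity \eqref{def_DnablaU}, capped off by an energy estimate.

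First I would represent $u_S$ as a convolution with the Stokes Green function $\mathcal{U}$ from \eqref{mathcalU}. For $f \in C^\infty_c(\R^3)$, integration by parts gives
\[
u_S(x) \;=\; \bigl(\mathcal{U}\star (S\na f)\bigr)(x) \;=\; \int_{\R^3} \bigl(\na\mathcal{U}(x-y)\,S\bigr)\,f(y)\,dy,
\]
where $\na\mathcal{U}\cdot S$ is the rank-three tensor of \eqref{relation_U}. Taking the symmetric gradient and contracting with $S$, the identity \eqref{def_DnablaU} yields
\[
S : D(u_S)(x) \;=\; \frac{3}{8\pi}\int_{\R^3} \bigl(\mM(x-y)S:S\bigr)\,f(y)\,dy \;=\; \frac{3}{8\pi}\,(g_S\star f)(x),
\]
the convolution being interpreted in the Calderon-Zygmund principal-value sense, so that $S:D(u_S)\in L^2$ whenever $f\in L^2$.

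Second, I would perform an energy estimate. Testing the Stokes equation $-\Delta u_S + \na q_S = S\na f$ against $u_S \in \dot H^1(\R^3)$ and using $\div u_S = 0$ together with symmetry of $S$ gives
\[
\int_{\R^3} |\na u_S|^2 \;=\; \int_{\R^3} (S\na f)\cdot u_S \;=\; -\int_{\R^3} f\,\bigl(S:\na u_S\bigr) \;=\; -\int_{\R^3} f\,\bigl(S:D(u_S)\bigr).
\]
Combining with the standard Korn-type identity $\int |\na u_S|^2 = 2\int |D(u_S)|^2$ for divergence-free fields on $\R^3$ and substituting the convolution formula for $S:D(u_S)$ from the previous step, one obtains
\[
2\int_{\R^3} |D(u_S)|^2 \;=\; -\frac{3}{8\pi}\int_{\R^3}\int_{\R^3} g_S(x-y)\,f(x)f(y)\,dx\,dy,
\]
which, after multiplying by $-8\pi/3$, is exactly the claimed identity.

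The only genuine subtlety is that $g_S$ is a Calderon-Zygmund kernel homogeneous of degree $-3$, so the double integral $\int\int g_S(x-y)f(x)f(y)\,dx\,dy$ is not absolutely convergent for $f \in L^2$ and must be read as the $L^2$ pairing of $f$ with the singular convolution $g_S\star f$. I would therefore carry out all the integrations by parts above first for $f \in C^\infty_c(\R^3)$, where every integral is classically defined, and then pass to the limit by density in $L^2$, using on one side the $L^2$-boundedness of $f\mapsto g_S\star f$ provided by the Calderon-Zygmund theorem, and on the other side the continuity of the map $f \mapsto u_S$ from $L^2$ into $\dot H^1(\R^3)$ given by standard Stokes well-posedness.
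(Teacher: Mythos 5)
Your proof is correct. Since the present paper does not reprove the lemma (it simply cites \cite[Lemma~2.2]{GVH}), there is no in-text proof to compare against; but your argument is a clean and natural derivation given the preliminaries of Section~\ref{sec_Stokes}, and it almost certainly coincides in spirit with the one in \cite{GVH}.

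To recap what you did, and why it is sound: (i) the Oseen representation $u_S=\mathcal{U}\star(S\nabla f)=(\nabla\mathcal{U}\,S)\star f$ follows by integration by parts and is legitimate for $f\in C^\infty_c$; (ii) contracting the symmetric gradient with $S$ and invoking \eqref{def_DnablaU} gives $S:D(u_S)=\frac{3}{8\pi}\,g_S\star f$, with the convolution in the Calderon--Zygmund principal-value sense (one checks that $g_S$ has zero mean over spheres, which is exactly the required cancellation); (iii) the weak formulation yields $\int|\nabla u_S|^2=-\int Sf:\nabla u_S=-\int f\,(S:D(u_S))$, and the identity $\int|\nabla u_S|^2=2\int|D(u_S)|^2$ for divergence-free fields closes the argument; (iv) the passage to $f\in L^2$ by density is justified since $f\mapsto g_S\star f$ is $L^2$-bounded (Calderon--Zygmund) and $f\mapsto u_S$ is bounded from $L^2$ into $\dot H^1$ by the Lax--Milgram estimate $\|\nabla u_S\|_{L^2}\le |S|\,\|f\|_{L^2}$. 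The only phrasing I would tighten is the claim that for $f\in C^\infty_c$ "every integral is classically defined": the double integral $\int\!\!\int g_S(x-y)f(x)f(y)\,dx\,dy$ is \emph{not} absolutely convergent near the diagonal even for smooth compactly supported $f$, since $g_S$ is homogeneous of degree $-3$; it must already be read as $\langle f, g_S\star f\rangle$ in the principal-value sense, exactly as you use it in step (ii). With that understood, the argument goes through unchanged.
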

Considering Definition \eqref{def_Wn}, it is tempting to replace $f$ by $\varphi (\rho_n - \rho)$ in the lemma, which would yield the formal identity: 
\begin{equation} \label{false_identity}
 "  \int_{\R^6} g_S(x - y) \varphi(x) \Big(  \rho_n(dx) -  \rho(x) dx \Big) \varphi(y)  \Big(  \rho_n(dy) -  \rho(y) dy \Big)  =  - \frac{16\pi}{3n^2} \int_{\R^3} |D(h_n)|^2 "
 \end{equation}
where $h_n$ solves the equation 
\begin{equation} \label{stokes_hn}
-\Delta h_n + \na p_n = n \div\big(S \varphi (\rho_n - \rho)\big), \quad \div h_n = 0 \quad \text{ in  } \R^3. 
\end{equation}
The solution of this equation decaying to zero is explicit, and given by 
\begin{equation} \label{def_hn}
 h_n :=  \sum \varphi(x_i) G_S(x-x_i) - n \, \mathcal{U} \star S \na (\varphi \rho)  
 \end{equation}
where $G_S(x) = (S\na) \cdot \mathcal{U}(x)  = - \frac{3}{8\pi} (Sx \cdot x) \frac{x}{|x|^5}$, while the corresponding pressure field is 
$$ p_n =  \sum \varphi(x_i) p_S(x-x_i)   - n \,  \mathcal{P} \star S \na (\varphi \rho) $$
where $p_S(x) = (S\na) \cdot  \mathcal{P}(x) = - \frac{3}{4\pi} \frac{(Sx \cdot x)}{|x|^5}$. 

\medskip
However, this formal identity is not justified, as both sides are infinite: the left-hand side is infinite due to the singularity of the kernel at the diagonal (which is excluded in Definition \eqref{def_Wn}), and the right-hand side is infinite because the solution $h_n$ of \eqref{stokes_hn} is not in $\dot{H}^1(\R^3)$, as $\rho_n$ is an atomic measure. Still, one can interpret $\mW_n[\varphi]$ in terms of a so-called renormalized energy, intensively studied in the context of Coulomb gases and Ginzburg-Landau systems. See \cite{MR3309890} and the references therein for more. One must at first regularize the singular measure. More precisely, following \cite{GVH}, we consider for all $\eta > 0$, the field $G_S^\eta$ satisfying 
\begin{align*}
& G_S^\eta = G_S, \quad |x| \ge  \eta, \quad 
- \Delta G_S^\eta + \na p_S^\eta =  0, \quad \div G_S^\eta = 0, \quad |x| < \eta, \\
\end{align*}
Note that the condition $G_S^\eta = G_S$ at $|x|=\eta$ contained in the first condition  can be seen as a Dirichlet condition for the Stokes problem satisfied by $G_S^\eta$ inside $\{ |x| < \eta \}$. Let us stress that $G_S^\eta$ belongs to $H^1_{loc}(\R^3)$, and can be seen as a regularization of $G_S$. Accordingly, we set  
\begin{equation} \label{def_hn_eta}
 h^\eta_n :=  \sum_i \varphi(x_i) G_S^\eta(x-x_i) - n \, \mathcal{U} \star S \na (\varphi \rho)  
 \end{equation}
As shown in \cite[Lemma 3.5]{GVH}, it solves the modified Stokes equation
\begin{equation} \label{stokes_hn_eta}
-\Delta h^\eta_n + \na p^\eta_n = \sum_i  \varphi(x_i)  \div\big(\Psi^\eta (\cdot - x_i)\big) - n \div(S \varphi \rho), \quad \div h^\eta_n = 0 \quad \text{ in  } \R^3. 
\end{equation}
where 
\begin{equation} \label{def_Psi_eta}
\begin{aligned}
 \Psi^\eta & :=   \frac{3}{\pi \eta^5} \left( Sx \otimes x + x \otimes Sx - 5\frac{|x|^2}{2} S + \frac{5}{4} \eta^2 S \right) -  2 D(G^\eta_S)(x) + p^\eta_S(x) {\rm Id}, \quad |x| < \eta,\\ 
 \Psi^\eta & := 0, \quad   |x| > \eta. 
 \end{aligned}
\end{equation}
A main advantage of this regularization is that $h_n - h_n^\eta$ is supported in $\cup_i B(x_i, \eta)$.  The main result, that is a straightforward adaptation of \cite[Propositions 3.7 and  3.8]{GVH} (dedicated to the special case $\varphi = 1$) is given by 
 \begin{prop} {\bf (Renormalized energy formula)} \label{prop_renormalized}
\begin{equation*} \label{asymptote_WN}
\mW_n[\varphi] = - \frac{25}{2}  \lim_{\eta \rightarrow 0}  \left( \frac{1}{n^2} \int_{\R^3} | \na h_{n}^\eta |^2  -  \frac{1}{n^2 \eta^3} \Big( \int_{B(0,1)} |\na G_S^1|^2 + \frac{3}{10\pi} |S|^2 \Big)  \sum_i |\varphi(x_i)|^2 \right) 
\end{equation*}
More precisely, for $\eta <  \frac{c}{2} n^{-\frac13}$, where $c$ is the constant appearing in \eqref{A1}, one has 
$$ \biggl| \mW_n[\varphi] + \frac{25}{2} \left( \frac{1}{n^2}\int_{\R^3} | \na h_{n}^\eta |^2  -  \frac{1}{n^2\eta^3} \Big( \int_{B(0,1)} |\na G_S^1|^2 + \frac{3}{10\pi} |S|^2 \Big)  \sum_i |\varphi(x_i)|^2 \right) \biggr| \le C \eta. $$
\end{prop}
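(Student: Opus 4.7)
The strategy is to mimic the proof of \cite[Propositions 3.7 and 3.8]{GVH}, which treats the case $\varphi \equiv 1$; the introduction of a smooth test function $\varphi$ creates only cosmetic changes. The guiding principle is to interpret $\mW_n[\varphi]$ as a Stokes energy associated with the signed measure $\tilde f := \varphi \rho_n - \varphi \rho$, renormalized so as to subtract the singular self-interactions carried by the point-mass part of $\rho_n$.

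First, I expand $(\rho_n - \rho)(dx)(\rho_n - \rho)(dy)$ into three pieces and use the $L^p \to L^p$ continuity of $\mM \star $ to see that $\mW_n[\varphi] = \tfrac{75}{16\pi} \int g_S(x-y) \tilde f(dx) \tilde f(dy)$, diagonal excluded. The preceding lemma gives formally $\int g_S \, \tilde f\, \tilde f = -\tfrac{16\pi}{3} \int |D(\tilde u)|^2$ with $\tilde u$ solving the Stokes equation with source $S\na \tilde f$. Since $G_S = (S\na)\cdot \mathcal{U}$, one has $n\tilde u = h_n$ with $h_n$ the field in \eqref{def_hn}, and using $2|D(v)|^2 = |\na v|^2$ for divergence-free $v$ this produces the formal identity $\mW_n[\varphi] = -\tfrac{25}{2n^2} \int |\na h_n|^2$. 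Both sides are of course infinite (the diagonal of $g_S$ on the left, the singularities of $h_n$ at each $x_i$ on the right), which motivates the cutoff.

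To make this identity rigorous, I would work with the regularized field $h_n^\eta$ of \eqref{def_hn_eta}, which solves the modified Stokes system \eqref{stokes_hn_eta}. Testing \eqref{stokes_hn_eta} against $h_n^\eta$ itself and integrating by parts yields
\begin{equation*}
\frac{1}{n^2}\int_{\R^3}|\na h_n^\eta|^2 = -\frac{1}{n^2}\sum_i \varphi(x_i) \int_{B(x_i,\eta)} \Psi^\eta(\cdot - x_i):\na h_n^\eta \; + \; \frac{1}{n}\int_{\R^3} S\varphi\rho : \na h_n^\eta.
\end{equation*}
Under \eqref{A1}, for $\eta < \tfrac{c}{2} n^{-1/3}$ the balls $B(x_i,\eta)$ are pairwise disjoint, so only the Stokeslet $\varphi(x_i) G_S^\eta(\cdot - x_i)$ piece of $h_n^\eta$ contributes to the $i$-th integral in the first sum, up to cross terms with the other particles $j \neq i$ and with the smooth piece $-n\,\mathcal{U} \star S\na(\varphi\rho)$. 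Using the scaling $G_S^\eta(x) = \eta^{-2} G_S^1(x/\eta)$ together with the explicit form \eqref{def_Psi_eta} of $\Psi^\eta$, each diagonal term produces exactly $\tfrac{|\varphi(x_i)|^2}{\eta^3} \bigl( \int_{B(0,1)} |\na G_S^1|^2 + \tfrac{3}{10\pi} |S|^2 \bigr)$: the first summand is the self-energy of the Stokeslet inside the ball, while the $\tfrac{3}{10\pi}|S|^2$ comes from the algebraic correction in $\Psi^\eta$, and this computation is essentially the one carried out in \cite{GVH} for $\varphi \equiv 1$. The off-diagonal pair terms ($j \neq i$) and the cross terms with $-n\,\mathcal{U} \star S\na(\varphi\rho)$ reassemble, up to $O(\eta)$ errors, into the double-integral expression of $\mW_n[\varphi]$; the $\rho$–$\rho$ contribution closes via Calderon-Zygmund continuity.

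Collecting all pieces yields the displayed quantitative bound, and passing to $\eta \to 0$ gives the limit formula. The $\varphi$-dependence only enters through pointwise factors $\varphi(x_i)^2$ inside the balls and through smooth convolutions with $\varphi \rho$ elsewhere, so the only genuine novelty compared to $\varphi \equiv 1$ lies in freezing $\varphi$ at the center of each ball $B(x_i,\eta)$: the resulting additional errors carry an extra factor $\eta \|\na \varphi\|_\infty$ and fit the $O(\eta)$ remainder. I expect this local freezing argument, together with the precise identification of the two $\eta^{-3}$ coefficients in the self-energy, to be the main technical point; all other ingredients transpose line-by-line from the $\varphi \equiv 1$ case of \cite{GVH}.
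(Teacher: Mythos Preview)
Your proposal is correct and follows the same approach as the paper, which simply refers the reader to \cite[Propositions 3.7 and 3.8]{GVH} and notes that the extension to general $\varphi$ is a straightforward adaptation. One small remark: the weights $\varphi(x_i)$ in \eqref{def_hn_eta} are already point values, so no ``freezing'' of $\varphi$ is needed on the Stokeslet side; the extra $O(\eta)$ errors for general $\varphi$ arise rather from Taylor-expanding the smooth piece $n\,\mathcal{U} \star S\na(\varphi\rho)$ on each ball $B(x_i,\eta)$, which is indeed a cosmetic modification of the $\varphi \equiv 1$ argument.
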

The first formula in this proposition is the rigorous translation of the false identity \eqref{false_identity}: it connects the mean field $\mW_n[\varphi]$ to an energy. However, two differences are in order: first, the energy of $h_n$ being infinite, it is replaced by the energy of the regularization $h_n^\eta$. Second, and more importantly, one must substract the term $\frac{1}{\eta^3} \Big( \int_{B(0,1)} |\na G_S^1|^2 + \frac{3}{10\pi} |S|^2 \Big)  \sum_i |\varphi(x_i)|^2$, which corresponds to the energy of self-interaction, and blows up as $\eta \rightarrow 0$. It is only the difference of the two quantities that does not blow up as $\eta \rightarrow 0$ and gives $\mW_n[\varphi]$. Note that the removal of the self-interaction is coherent with the fact that the expression for $\mW_n[\varphi]$ excludes the diagonal, see \eqref{def_Wn}. 

\medskip
\noindent
{\bf Step 2. Existence of the mean-field limit for periodic and random particle configurations}

\medskip
Thanks to the second formula in Proposition \ref{prop_renormalized}, we deduce that 
$$ \lim_{n \rightarrow +\infty}  \mW_n[\varphi] = - \frac{25}{2} \lim_{n \rightarrow +\infty} \frac{1}{n^2} \int_{\R^3} | \na h_{n}^{\eta_n} |^2 -   \frac{1}{n^2 \eta_n^3} \Big( \int_{B(0,1)} |\na G_S^1|^2 + \frac{3}{10\pi} |S|^2 \Big) \sum_i |\varphi(x_i)|^2  $$
when $\eta_n = \tilde{\eta} \,  n^{-\frac13}$,  for any fixed $\tilde \eta < \frac{c}{2}$. Note that for such a scaling $\eta \sim n^{-\frac13}$, contrary to the case where $\eta$ goes to zero at fixed $n$, the second term has a finite limit: namely, by \eqref{A0}, 
$$ \lim_n   \frac{1}{n^2 \eta^3} \Big( \int_{B(0,1)} |\na G_S^1|^2 + \frac{3}{10\pi} |S|^2 \Big)  \sum_i |\varphi(x_i)|^2  = \frac{1}{(\tilde \eta)^3} \int_{B(0,1)} |\na G_S^1|^2 + \frac{3}{10\pi} |S|^2 \Big) \int_{\R^3} |\varphi|^2  \rho.$$
Hence, the main point is to understand the limit of  $\frac{1}{n^2} \int_{\R^3} | \na h_{n}^{\eta_n} |^2$.  In the case $\varphi=1$, this is analyzed in details in \cite{GVH}, for two classes of point configurations: periodic and random stationary. Namely, one considers in \cite{GVH} particles $x_1, \dots, x_n$ given by 
$$ \{ x_1, \dots, x_n \} := \eps \omega \cap \mO, \quad \eps \ll 1 $$
for $\omega$ an infinite locally finite subset of $\R^3$, of  two possible kinds:  
\begin{description}
\item[i)] {\em Periodic Patterns: } $ \omega = \{ z_1, \dots, z_m \} + \mathbb{Z}^3$
for $z_1, \dots, z_m$ distinct points of $(0,1)^3$, such that $|z-z'| > c \, m^{-\frac13}$ for all $z\neq z' \in \omega$, where $c$ is the constant in \eqref{A1}.  
\item[ii)] {\em Stationary Point Process}:  $\omega$ is the realization of an ergodic stationary point process, of mean intensity $m$, satisfying the hardcore condition 
$|z-z'| > c \, m^{-\frac13}$ for all $z\neq z' \in \omega$. 
 \end{description}
Note  that in the periodic case, the number $n$ of particles in $\mO$ is equivalent to $m \, \eps^{-3}$ as $\eps \rightarrow 0$. In the random case, this number $n$ is random, but is almost surely still equivalent to $m \eps^{-3}$ by the ergodic theorem (under an additional convexity assumption on  $\mO$).  In particular, as $\eps \rightarrow 0$, one has  $n \rightarrow +\infty$ and one can consider the asymptotic behaviour of $\mW_n[\varphi]$ or as said before of  $\frac{1}{n^2} \int_{\R^3} | \na h_{n}^{\eta_n} |^2$. Note that in both cases, assumption \eqref{A0} is satisfied with $\rho = 1_{\mO}$, and  for $\eps$ small enough (or $n$ large enough), \eqref{A1} is also satisfied.  

\medskip
The limit of the renormalized energy can then be tackled through homogenization techniques. Indeed, we claim that equation \eqref{stokes_hn_eta} is close to systems of the form 
\begin{equation} \label{baby_model}
- \Delta h^\eps + \na p^\eps = \eps^{-3} \div \Big( f(x) \big( F(x/\eps) - \overline{F}\big) \Big), \quad \div h^\eps = 0 
\end{equation} 
where $\overline{F}$ is the mean of $F$ (in the periodic or random sense).  Roughly,  one has the correspondence 
\begin{equation}
\eps \approx n^{-\frac13}, \quad F \approx \sum_{z \in \omega} \Psi^{\tilde \eta}(\cdot - z), \quad  \overline{F} \approx 1,  \quad f \approx 1_{\mO} \varphi.   
\end{equation}
The point is that for a model of type \eqref{baby_model}, the behaviour of the solution $h^\eps$ is well-understood. For instance, in the periodic case, one has 
$$ h^\eps \approx  \eps^{-2} g(x) H(x/\eps) $$
where $H = H(y)$ is a corrector, solving 
\begin{equation}  \label{corrector_baby_model}
- \Delta H + \na P = \div F, \quad \div H = 0,  
\end{equation}  
with periodic boundary conditions. In particular, one can show that
$$ \eps^6 \int_{\R^3} |\na h^\eps|^2  \: \rightarrow  \:     \int_{(0,1)^3} |\na H(y)|^2 dy \, \int_{\R^3} |g(x)|^2 dx. $$
As shown in \cite{GVH} in the special case $\varphi=1$,  a similar situation holds in the context of system \eqref{stokes_hn_eta}. There is a corrector problem similar to \eqref{corrector_baby_model}, that takes the form: 
\begin{equation} \label{corrector problem}
-\Delta H^{\overline{\eta}} + \na P^{\overline{\eta}} = \div \sum_{z \in \omega} \Psi^{\overline{\eta}}(\cdot - z), \quad \div H^{\overline{\eta}} = 0 \quad \text{ in } \: \R^3, 
\end{equation}
where $\displaystyle 0 < \overline{\eta} < \frac{c}{2} m^{-\frac13}$. This can be solved in both periodic and random stationary frameworks, as shown in  \cite{GVH}. Roughly:  
\begin{description}
\item[i)] In the periodic case, there is a unique (up to an additive constant) weak solution $H^{\overline{\eta}}$ in $H^1_{loc}(\R^3)$, which is $\Z^d$ periodic and mean-free over a period. 
\item[ii)] In the random stationary case, there is almost surely in $\omega$  a weak solution $H^{\overline{\eta}}(\cdot, \omega)$ in $H^1_{loc}(\R^3)$ with a uniquely defined stationary and mean-free gradient. More precisely,  $\na H^{\overline{\eta}}(y, \omega) = D_H(\omega-y)$, where $D_H$ is an $\R^3$-valued $L^2$ random variable with zero mean, unique solution of a probabilistic variational formulation: see \cite{GVH} for details.  
\end{description}

\medskip
One can then express our mean field limit in terms of the energy of this corrector. This is 
\begin{prop} \label{final_prop_A2}
Let $\overline{\eta} < \frac{c}{2} m^{-\frac13}$. Then, as $\eps \rightarrow 0$, one has  $n \rightarrow +\infty$, and 
\begin{align*}  
 \mW_n[\varphi]  \:  \rightarrow  \: &  \frac{25}{2} \left( - \frac{1}{m^2} \int_{(0,1)^3} |\na H^{\overline{\eta}}|^2 + \frac{1}{(\overline{\eta})^3 m} \Big( \int_{B(0,1)} |\na G_S^1|^2 + \frac{3}{10\pi} |S|^2 \Big) \right) \int_{\mO} |\varphi|^2 
 \end{align*}
 in the periodic case, whereas 
 \begin{align*}  
 \mW_n[\varphi]  \:  \rightarrow  \: &  \frac{25}{2} \left( - \frac{1}{m^2} \mathbb{E}  |\na H^{\overline{\eta}}(0, \cdot)|^2 + \frac{1}{(\overline{\eta})^3 m} \Big( \int_{B(0,1)} |\na G_S^1|^2 + \frac{3}{10\pi} |S|^2 \Big) \right) \int_{\mO} |\varphi|^2 
 \end{align*} 
  in the random stationary case. In particular, assumption \eqref{A2} is satisfied with 
\begin{equation}   \label{formula_mu2_periodic}
 \mu_2 S : S  =  \frac{25\mu}{2} \left( - \frac{1}{m^2} \int_{(0,1)^3} |\na H^{\overline{\eta}}|^2 + \frac{1}{(\overline{\eta})^3 m} \Big( \int_{B(0,1)} |\na G_S^1|^2 + \frac{3}{10\pi} |S|^2 \Big) \right)  
  \end{equation}
 in the periodic case, whereas 
 \begin{equation}   \label{formula_mu2_random}
 \mu_2 S : S  =  \frac{25\mu}{2} \left( - \frac{1}{m^2}\mathbb{E}  |\na H^{\overline{\eta}}(0, \cdot)|^2 + \frac{1}{(\overline{\eta})^3 m} \Big( \int_{B(0,1)} |\na G_S^1|^2 + \frac{3}{10\pi} |S|^2 \Big) \right) 
 \end{equation}
  in the random stationary case. 
\end{prop}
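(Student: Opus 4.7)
The plan is to follow the strategy used in \cite{GVH} for the case $\varphi = 1$, inserting the test function $\varphi$ where appropriate. The starting point is the quantitative formula of Proposition \ref{prop_renormalized}. Fix $\bar\eta < \frac{c}{2} m^{-\frac13}$ and set $\eta_n := \bar\eta\, \eps$; since $\eps \sim (m/n)^{1/3}$ (exactly in the periodic case, almost surely in the random case by the ergodic theorem applied to $\omega$ on $\mO$), one has $\eta_n < \frac{c}{2} n^{-\frac13}$ and $\eta_n \rightarrow 0$ as $\eps \rightarrow 0$. Applying Proposition \ref{prop_renormalized} with $\eta = \eta_n$ gives
\[
\mW_n[\varphi] = -\frac{25}{2} \left( \frac{1}{n^2} \int_{\R^3} |\na h_n^{\eta_n}|^2 - \frac{1}{n^2 \eta_n^3} \Big( \int_{B(0,1)} |\na G_S^1|^2 + \frac{3}{10\pi} |S|^2 \Big) \sum_i |\varphi(x_i)|^2 \right) + O(\eps),
\]
and the self-interaction part is immediately handled via \eqref{A0} (with $\rho = 1_{\mO}$): since $n\eta_n^3 = \bar\eta^3 n\eps^3 \rightarrow \bar\eta^3 m$,
\[
\frac{1}{n^2 \eta_n^3} \sum_i |\varphi(x_i)|^2 \: \xrightarrow[\eps \to 0]{} \: \frac{1}{\bar\eta^3 m} \int_\mO |\varphi|^2.
\]

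The core of the proof is then to show
\[
\frac{1}{n^2} \int_{\R^3} |\na h_n^{\eta_n}|^2 \: \xrightarrow[\eps \to 0]{} \: \frac{1}{m^2} \, \overline{|\na H^{\bar\eta}|^2} \, \int_{\mO} |\varphi|^2,
\]
where the overline denotes the period average in the periodic case, and $\mathbb{E}|\na H^{\bar\eta}(0,\cdot)|^2$ in the random case. The idea, following \cite{GVH}, is to recognize \eqref{stokes_hn_eta} as a modulated version of the homogenization corrector problem \eqref{corrector problem}: after the change of variables $y = x/\eps$, the source becomes the slowly varying profile $\varphi(\eps y)$ multiplying the stationary (or periodic) source $\sum_{z \in \omega} \div \Psi^{\bar\eta}(\cdot - z)$, minus the subtracted macroscopic mean $\eps^{-3} \div(S\varphi\rho)$. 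One introduces the two-scale ansatz $h_n^{\eta_n}(x) \approx \eps^{-2} \varphi(x)\, H^{\bar\eta}(x/\eps)$ (with the natural stationary analogue in the random setting). Plugging this into \eqref{stokes_hn_eta} produces only commutator terms of one order lower in $\eps$ (coming from derivatives hitting $\varphi$ instead of the corrector), plus boundary-layer errors localized in a neighborhood of $\pa \mO$ of vanishing volume. The Stokes energy estimate then controls the difference between $h_n^{\eta_n}$ and this ansatz in $\dot H^1$ by $o(\eps^{-3})$; combined with the ergodic theorem (Riemann-sum convergence in the periodic case)
\[
\eps^3 \int_{\R^3} |\na H^{\bar\eta}(x/\eps)|^2 |\varphi(x)|^2 \, dx \: \xrightarrow[\eps \to 0]{} \: \overline{|\na H^{\bar\eta}|^2} \int_{\mO} |\varphi|^2,
\]
and the fact that $\eps^6 \sim m^2/n^2$, this yields the target convergence.

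Combining the three contributions gives the announced formula for $\lim_n \mW_n[\varphi]$. Since the limit depends on $\varphi$ only through $\int_\mO |\varphi|^2$, the reformulation \eqref{reformulation_A2_2} of assumption \eqref{A2} is fulfilled with $\mu_2 S : S$ given by \eqref{formula_mu2_periodic} in the periodic case and by \eqref{formula_mu2_random} in the random stationary case. The main obstacle lies in the homogenization step in the random setting: there is no clean periodic corrector and one must work with the stationary gradient $D_H = \na H^{\bar\eta}(\cdot,\omega)$ defined only via a variational formulation on the probability space, along with almost-sure control of the spatial averages. This machinery is already set up in \cite{GVH} for $\varphi \equiv 1$; the insertion of the smooth factor $\varphi$ is a modulation producing only lower order commutator terms that can be absorbed uniformly, so no genuinely new probabilistic difficulty arises.
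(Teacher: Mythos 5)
Your proposal follows the same approach as the paper: invoke the quantitative version of Proposition \ref{prop_renormalized} with $\eta_n\sim\bar\eta\,n^{-1/3}$, handle the self-interaction term via \eqref{A0}, introduce the two-scale ansatz $h^\eps_{app}\approx\eps^{-2}\varphi(x)H^{\bar\eta}(x/\eps)$, control the difference through a Stokes energy estimate (with boundary-layer contributions), and conclude by ergodic/Riemann-sum convergence and the reformulation \eqref{reformulation_A2_2} of \eqref{A2}. This matches the paper's proof sketch in structure and key ideas, so nothing further is needed.
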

This proposition was established in \cite{GVH} in the case $\varphi=1$, and the proof extends without too much difficulty to the general case. Note that the right-hand sides that appear above  do not actually  depend on $\overline{\eta}$ (as long as $\overline{\eta} < \frac{c}{2} m^{-\frac13}$), as the left-hand side $\mW_n[\varphi]$ does not involve $\overline{\eta}$. See \cite[Proposition 4.4]{GVH} for a direct proof.   

\medskip
Let us briefly evoke the main steps of the proof of Proposition \ref{final_prop_A2}, restricting to the periodic case for brevity. Keeping in mind that $n \sim  m \, \eps^{-3}$ as $\eps \rightarrow 0$, the point is to show that 
$$ \eps^6 \int_{\R^3} |\na h^{\overline{\eta} \eps}|^2 \: \rightarrow  \int_{(0,1)^3} |\na H^{\overline{\eta}}|^2 \,  \int_\mO |\varphi|^2 $$
The idea is to introduce an approximation of  $h^{\overline{\eta} \eps}$ using the corrector solution of \eqref{corrector_baby_model}. 
Namely, we introduce an approximate velocity field $h^\eps_{app} \in H^1_{loc}(\R^3)$ and pressure field $p^\eps_{app} \in L^2_{loc}(\R^3)$ defined by  
\begin{align*}
h^\eps_{app}(x) & :=  \frac{1}{\eps^2} \varphi(x)  H^{\overline{\eta}}\left(\frac{x}{\eps}\right) \: - \: \int_{\mO}   \frac{1}{\eps^2} \varphi(x)  H^{\overline{\eta}}\left(\frac{x}{\eps}\right) \, dx, \quad x \in \mO \\
p^\eps_{app}(x) & :=  \frac{1}{\eps^3} \varphi(x)  P^{\overline{\eta}}\left(\frac{x}{\eps}\right) \: - \: \int_{\mO}   \frac{1}{\eps^3}  \varphi(x)  P^{\overline{\eta}}\left(\frac{x}{\eps}\right) \, dx, \quad x \in \mO
\end{align*}
and
$$ -\Delta h^\eps_{app} + \na p^\eps_{app} = 0, \quad \div h^\eps_{app} = 0, \quad \text{ in } \: \R^3 \setminus\overline{\mO}. $$ 
One has straightforwardly 
$$ \eps^6 \int_{\mO} |\na h^\eps_{app}|^2 \: \rightarrow  \int_{(0,1)^3} |\na H^{\overline{\eta}}|^2 \,  \int_\mO |\varphi|^2, \quad \eps \rightarrow 0 $$
Moreover, reasoning exactly as in \cite{GVH}, one shows that the $H^{\frac12}(\pa \mO)$ norm of $h^\eps_{app}$ goes to zero as $\eps \rightarrow 0$, which implies 
$$ \| \na h^\eps_{app} \|_{L^2(\R^3 \setminus\overline{\mO})} \: \rightarrow  \: 0, \quad \eps \rightarrow 0 $$
so that 
$$  \eps^6 \int_{\R^3} |\na h^\eps_{app}|^2 \: \rightarrow  \int_{(0,1)^3} |\na H^{\overline{\eta}}|^2 \,  \int_\mO |\varphi|^2, \quad \eps \rightarrow 0. $$
The final step of the proof consists in showing that 
$$ \eps^6 \int_{\R^3} \big|\na \big( h^{\overline{\eta} \eps} -   h^\eps_{app}\big)  \big|^2 \: \rightarrow  \: 0, \quad \eps \rightarrow 0 $$
This is a consequence of an energy estimate, performed  on the Stokes equation satisfied by the difference $h^{\overline{\eta} \eps} -   h^\eps_{app}$. In the case $\varphi = 1$, all details are provided in \cite{GVH}. For general $\varphi$, there are a few extra source terms in this Stokes equation, but they can be handled through similar ideas, so that we do not detail further.  

\medskip
\noindent
{\bf Step 3.} Explicit computation of $\mu_2$. 

\medskip
Finally, as further explained in \cite{GVH}, one can make formulas \eqref{formula_mu2_periodic} and \eqref{formula_mu2_random} more explicit. In the periodic case, with $\omega = \{z_1,\dots, z_m\} +\Z^3$, the following holds. 
\begin{prop} {\bf (Periodic case, see \cite[Propositions 5.4 and 5.5]{GVH})} 
\begin{enumerate}
\item One has 
$$\mu_2 S : S \: = \: \frac{25 \mu}{2m^2} \, \left(  \sum_{i\neq j \in \{1,\dots,m\}} \!\!\!\!\! S \na \cdot G_{S,1}(z_i - z_j)  + m \lim_{y \rightarrow 0} S\na \cdot (G_{S,1}(y) - G_{S}(y)) \right). $$  
where  $G_S(x) = (S \na) \cdot \mathcal{U}(x) = - \frac{3}{8\pi} (Sx \cdot x) \frac{x}{|x|^5}$ solves 
$$-\Delta G_S + \na p_S = (S\na) \cdot \delta, \quad \div G_S = 0, \quad \text{ in }  \R^3$$
and for all $L > 0$, $G_{S,L}$ is the periodic field with zero mean solution of 
\begin{equation} \label{def_GSL} 
-\Delta G_{S,L} + \na p_{S,L}  = (S\na) \cdot \sum_{z \in L \Z^3} \delta_z, \quad \div G_{S,L} = 0,  \quad \text{ in } \R^3. 
\end{equation}
\item In the special case of a simple cubic lattice ($m=1$), the formula simplifies into 
 $$\mu_2 S : S \: = \:  \mu \alpha \sum_i S_{ii}^2 + \mu \beta \sum_{i \neq j} S_{ij}^2  $$
 with $\alpha = \frac{5}{2} (1 - 60 a)$, $\beta =  \frac{5}{2} (1 + 40 a)$, and $a \approx -0,04655$.  
\end{enumerate}
\end{prop}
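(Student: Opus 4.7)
The plan is to start from the renormalized-energy identity \eqref{formula_mu2_periodic} and convert it into a pairwise lattice sum of $G_{S,1}$; part (2) then follows by specializing to $m=1$ and invoking cubic symmetry together with explicit Ewald summation.

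For part (1), the key is an explicit representation of the corrector $H^{\overline{\eta}}$ from \eqref{corrector problem}. Introduce the $\Z^3$-periodic, mean-free field $G_{S,1}^{\overline{\eta}}$ obtained by regularizing $G_{S,1}$ at each lattice point in the same way that $G_S^{\overline{\eta}}$ regularizes $G_S$, that is, the periodic solution of $-\Delta G_{S,1}^{\overline{\eta}} + \na p = \div\sum_{z\in\Z^3}\Psi^{\overline{\eta}}(\cdot-z)$. By linearity and uniqueness of $H^{\overline{\eta}}$, one has $H^{\overline{\eta}} = \sum_{i=1}^m G_{S,1}^{\overline{\eta}}(\cdot - z_i) - c$ for a normalizing constant $c$. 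Near each $z_i$ this decomposes as $G_S^{\overline{\eta}}(x - z_i) + R_i(x)$, with $R_i$ smooth at $z_i$ and satisfying $R_i(z_i) = r(0) + \sum_{j\neq i} G_{S,1}(z_i - z_j) - c$, where $r(y) := G_{S,1}(y) - G_S(y)$ is smooth at the origin because the two Green functions share the same Stokeslet singularity.

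Testing $H^{\overline{\eta}}$ against \eqref{corrector problem} and using that $B(z_i,\overline{\eta})$ are disjoint inside one cell (from \eqref{A1} and $\overline{\eta}<\tfrac{c}{2}m^{-1/3}$) yields
$$\int_{(0,1)^3} |\na H^{\overline{\eta}}|^2 \;=\; -\sum_{i=1}^m \int_{B(z_i,\overline{\eta})} \na H^{\overline{\eta}} : \Psi^{\overline{\eta}}(\cdot - z_i).$$
Splitting the right-hand side into a $j=i$ self-part and $j\neq i$ cross-parts, the self-part equals $m\int_{\R^3}|\na G_S^{\overline{\eta}}|^2$, which by the scaling $G_S^{\overline{\eta}}(x) = \overline{\eta}^{-2}G_S^1(x/\overline{\eta})$ is $m\overline{\eta}^{-3}\big(\int_{B(0,1)}|\na G_S^1|^2 + \int_{\R^3\setminus B(0,1)}|\na G_S|^2\big)$. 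A direct computation with the explicit $G_S(x) = -\tfrac{3}{8\pi}(Sx\cdot x)\tfrac{x}{|x|^5}$ gives $\int_{\R^3\setminus B(0,1)}|\na G_S|^2 = \tfrac{3}{10\pi}|S|^2$, so this self-part cancels the counter-term in \eqref{formula_mu2_periodic} exactly. The cross part, together with the $O(1)$ residue of the self-part, reduces as $\overline{\eta}\to 0$ (upon substituting $\div\Psi^{\overline{\eta}} = -\Delta G_S^{\overline{\eta}} + \na p_S^{\overline{\eta}}$ and integrating by parts against the smooth remainder $R_i$) to $\sum_i\bigl(\sum_{j\neq i} S\na\cdot G_{S,1}(z_i-z_j) + S\na\cdot r(0)\bigr)$, which is exactly formula (1). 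For part (2), with $m=1$ the pair sum is empty and $\mu_2 S : S = \tfrac{25\mu}{2}\,S\na\cdot r(0)$. The simple cubic lattice is invariant under the full cubic group $O_h$, so the quadratic form $S \mapsto S\na\cdot r(0)$ on $\text{Sym}_{3,\sigma}(\R)$ is $O_h$-invariant, and any such form has the shape $\alpha\sum_i S_{ii}^2 + \beta\sum_{i\neq j}S_{ij}^2$. Plugging $S=\mathrm{diag}(1,1,-2)$ (resp.\ $e_1\otimes e_2 + e_2\otimes e_1$) isolates $\alpha$ (resp.\ $\beta$) as an explicit rapidly-convergent Ewald-type sum for $r$, numerically producing the stated values with $a\approx -0.04655$.

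The main obstacle is the delicate $\overline{\eta}\to 0$ cancellation described above: one must verify that, after subtracting the explicit $\overline{\eta}^{-3}$ counter-term, the surviving $O(1)$ piece is \emph{exactly} $S\na\cdot r(0)$ and not some other regularization of the diagonal. This requires careful tracking of the boundary contributions on $\partial B(0,\overline{\eta})$ arising from the repeated integration by parts and hinges on the fact that the constant $\tfrac{3}{10\pi}|S|^2$ in \eqref{formula_mu2_periodic} precisely matches the "outside" tail $\int_{\R^3\setminus B(0,1)}|\na G_S|^2$ of the self-energy. The numerical evaluation of the lattice constant $a$ for the cubic lattice is a self-contained computation, independent of the analytical work above.
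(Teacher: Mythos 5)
Note first that the paper does \emph{not} prove this proposition: the heading reads ``see \cite[Propositions 5.4 and 5.5]{GVH}'', and the surrounding text simply records the statement after having derived \eqref{formula_mu2_periodic}. There is therefore no in-paper proof to compare against, only the cited reference. With that caveat, your strategy for part (1) --- writing the periodic corrector as $H^{\overline{\eta}} = \sum_i G_{S,1}^{\overline{\eta}}(\cdot - z_i) - c$, testing the corrector equation against itself, and observing that the self-energy $\int_{\R^3} |\na G_S^{\overline{\eta}}|^2 = \overline{\eta}^{-3}\bigl(\int_{B(0,1)}|\na G_S^1|^2 + \tfrac{3}{10\pi}|S|^2\bigr)$ cancels the counter-term in \eqref{formula_mu2_periodic} exactly --- is the natural way to extract the pair formula from the renormalized energy, and it is almost certainly the route of \cite{GVH}. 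The scalar check $\int_{\R^3 \setminus B(0,1)} |\na G_S|^2 = \tfrac{3}{10\pi}|S|^2$ is correct (on $|x|=1$ one computes $|\na G_S|^2 = \tfrac{9}{64\pi^2}(4|S\omega|^2 + 2(S\omega\cdot\omega)^2)$ and integrates), so that cancellation is indeed exact. Two points are asserted but should be argued, since the whole conclusion hinges on them: that $G_{S,1}^{\overline{\eta}} = G_S^{\overline{\eta}} + (G_{S,1}-G_S)$ on $B(0,\overline{\eta})$ (both sides solve the homogeneous Stokes problem there with boundary data $G_{S,1}$ on $\pa B(0,\overline{\eta})$, hence agree by uniqueness); and that $\int_{B(0,\overline{\eta})} \Psi^{\overline{\eta}} : \na w \to S\na\cdot w(0)$ for smooth $w$, which follows from $-\div \Psi^{\overline{\eta}} \to -(S\na)\cdot\delta$ but needs to be stated with the correct sign, as it determines the sign of the pair sum.

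The genuine gap is in part (2). The $O_h$-symmetry argument only yields $\mu_2 S:S = \alpha \sum_i S_{ii}^2 + \beta \sum_{i\neq j} S_{ij}^2$ for \emph{some} constants $\alpha,\beta$, but the proposition asserts the sharper structure $\alpha = \tfrac{5}{2}(1 - 60a)$, $\beta = \tfrac{5}{2}(1 + 40a)$ with a \emph{single} lattice constant $a$ controlling both and with the isotropic value $\tfrac{5}{2}$ as baseline. This coupling of $\alpha$ and $\beta$ through one scalar is a non-trivial structural consequence of the tensorial nature of the regularized double gradient of the periodic Stokeslet: one must decompose $S\na \cdot (G_{S,1} - G_S)(0)$ into a $\tfrac{1}{5}|S|^2$ piece plus a specific $O_h$-invariant but not $O(3)$-invariant form with coefficient $a$, and then read off the $-60$ and $+40$. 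Nothing in your proposal does this --- ``numerically producing the stated values'' is a claim, not a derivation. At minimum you would need to isolate the scalar lattice sum defining $a$, show it is the \emph{only} new constant that appears, and verify the rational coefficients; that is precisely the content of Proposition 5.5 of \cite{GVH}, and it is missing here.
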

In the ergodic  stationary case, under the almost sure assumption \eqref{A1},  a more explicit expression can be obtained in terms of the two-point correlation function $\rho_2(x,y) = r(x-y)$. We remind that a point process in $\R^3$ has a two-point correlation function $\rho_2 \in L^1_{loc}(\R^3 \times \R^3)$ if for all bounded $K$ and smooth function $F$
$$ \mathbb{E} \sum_{z \neq z' \in K} F(z,z') = \int_{K \times K} F(x,y) \rho_2(x,y) dx dy.  $$
\begin{prop} {\bf (Ergodic stationary case, see \cite[Proposition 5.6]{GVH})}  \label{old_prop_ergodic}

\smallskip
\noindent
Assume that  $\rho_2(x,y) = r(x-y)$ with $r \in L^1_{loc}(\R^3)$, zero near the origin. Then, for any $M > 0$,   almost surely, 
$$ \mu_2 S: S  =  \frac{25 \mu}{2 m^2} \lim_{L \rightarrow +\infty} \frac{1}{L^3} \int_{(M,L-M)^3 \times (M,L-M)^3} (S\na) \cdot G_{S,L}(z-z') \, r(z-z') dz dz' $$
where $G_{S,L}$ was introduced in \eqref{def_GSL}. 
\end{prop}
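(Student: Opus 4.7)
The strategy closely follows \cite[Proposition 5.6]{GVH}: we approximate the random stationary configuration by periodic ones on cubes of increasing size and invoke the periodic case of the previous proposition. Given a realization $\omega$, let $\Lambda_L := (0,L)^3$, $n_L := \#(\omega\cap\Lambda_L)$, and form the periodic approximation $\omega_L := (\omega\cap\Lambda_L) + L\Z^3$. By the ergodic theorem, $n_L/L^3 \to m$ almost surely. Applied to $\omega_L$ and rescaled using the scaling relation $G_{S,L}(x) = L^{-2} G_{S,1}(x/L)$, the periodic formula provides an approximate value
$$
\mu_2^{(L)} S:S = \frac{25\mu}{2 n_L^2}\Bigg(\sum_{i\neq j,\, z_i, z_j \in \Lambda_L} S\na\cdot G_{S,L}(z_i - z_j) + n_L \lim_{y \to 0} S\na \cdot (G_{S,L}(y) - G_S(y))\Bigg),
$$
and $\mu_2^{(L)} \to \mu_2$ as $L \to \infty$, because the corrector for the $L$-periodic extension of $\omega\cap\Lambda_L$ converges to the stationary corrector $H^{\overline{\eta}}$ of Proposition \ref{final_prop_A2} in the sense of energies.

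Taking expectations and using the defining property $\mathbb{E}\sum_{z \neq z' \in \omega \cap K} F(z,z') = \int_{K\times K} F(x,y) r(x-y) dx dy$ of the two-point correlation function, the pair sum becomes
$$
\mathbb{E}\sum_{i\neq j, z_i, z_j \in \Lambda_L} S\na\cdot G_{S,L}(z_i - z_j) = \int_{\Lambda_L\times \Lambda_L} S\na\cdot G_{S,L}(z-z') r(z-z') dz dz'.
$$
Restricting the integration domain to $(M,L-M)^3\times(M,L-M)^3$ introduces a boundary contribution: the layer $\Lambda_L \setminus (M, L-M)^3$ has volume $O(ML^2)$, and since $r$ vanishes near the origin and $G_{S,L}$ is locally integrable against bounded densities away from the diagonal, this contribution is $O(ML^2/n_L^2) = O(M/L^4)$ (before multiplying by the prefactor $n_L^2 \sim m^2 L^6$, which still yields $O(M/L)$ once renormalized by the $1/L^3$ in the statement). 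This vanishes as $L \to \infty$ with $M$ fixed.

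The main obstacle is to handle the self-interaction term $n_L \lim_{y\to 0} S\na\cdot (G_{S,L}(y) - G_S(y))$ in the large-$L$ limit. By the scaling above, $G_{S,L} - G_S$ is smooth near the origin with value $O(L^{-2})$, so its $S\na\cdot$ derivative at $0$ is $O(L^{-3})$; the self-interaction thus contributes $n_L \cdot O(L^{-3}) \sim m$, which after the normalization $\tfrac{1}{n_L^2}\cdot L^3 = O(1/L^3)$ used in identifying $\mu_2$ with $\tfrac{1}{L^3}\int (\cdots)$ becomes negligible. The rigorous justification of $\mu_2^{(L)} \to \mu_2$, which relies on showing that the renormalized energy of the periodic approximation converges to that of the stationary corrector via a quantitative two-scale argument, is the technical heart of the proof and was carried out in \cite{GVH} for $\varphi = 1$; only minor cosmetic modifications are needed here since we ultimately only use the periodic formula, not its $\varphi$-weighted generalization.
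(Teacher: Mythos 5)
The paper does not actually prove this proposition; it is imported directly from \cite[Proposition 5.6]{GVH}, so there is no in-text proof to compare against. Your periodization strategy is a plausible reconstruction, but the displayed starting formula is off by a factor of $L^3$, which would make the whole limit vanish. Applying item~1 of the preceding proposition to the $L\Z^3$-periodic pattern built from $\omega\cap (0,L)^3$ requires rescaling by $1/L$: after the change of variables there are $n_L$ points per unit cell, and $(S\na)\cdot G_{S,1}\big((z_i-z_j)/L\big) = L^3\,(S\na)\cdot G_{S,L}(z_i-z_j)$ by the identity $G_{S,L}(x)=L^{-2}G_{S,1}(x/L)$; the self-interaction term rescales the same way. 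The correct approximate expression is therefore
$$\mu_2^{(L)} S:S = \frac{25\mu L^3}{2 n_L^2}\Big(\sum_{i\neq j}(S\na)\cdot G_{S,L}(z_i-z_j) + n_L\lim_{y\to 0}(S\na)\cdot\big(G_{S,L}(y)-G_S(y)\big)\Big),$$
i.e.\ the prefactor is $\frac{25\mu}{2m_L^2 L^3}$ with $m_L := n_L/L^3 \to m$, not $\frac{25\mu}{2n_L^2}$. You evidently know this (you later invoke ``the normalization $\frac{1}{n_L^2}\cdot L^3$''), but the two statements are mutually inconsistent, and since $\int_{(0,L)^6}(S\na)\cdot G_{S,L}\,r\,dz\,dz'\sim L^3$, that factor is precisely what makes the limit nontrivial.

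Beyond this, several steps are asserted rather than argued. The statement is almost-sure, yet you only take expectations of the pair sum; replacing the random quantity $\frac{1}{L^3}\sum_{i\neq j}(S\na)\cdot G_{S,L}(z_i-z_j)$ by the deterministic integral against $r$ is itself an ergodic-theorem statement and needs to be made explicit. The periodization can also create near-boundary pairs (a point near one face and the image of a point near the opposite face) that violate the hardcore condition required by the periodic proposition, so it cannot be applied to $\omega_L$ as is. Your boundary-layer estimate tacitly uses boundedness of $r$ and the mean-free, oscillatory structure of $(S\na)\cdot G_{S,L}$; a bare volume count is not enough under the stated hypothesis $r\in L^1_{loc}$. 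Finally, the ``technical heart'' you set aside, the convergence of the periodized renormalized energy to the stationary one, is exactly what \cite{GVH} establishes; acknowledging it is fine, but it is the step that actually carries the proof.
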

One can actually push further this calculation in the case of an isotropic process.   
\begin{prop} {\bf (Isotropic point process)}

\smallskip
\noindent
Assume that $\rho_2(x,y) = r(x-y)$ with $r \in L^\infty(\R^3)$, zero near the origin, radial and such that 
$r = m^2$ for $|x|$ large.  Then, almost surely, 
$$ \mu_2 S : S = \frac{5}{2} \mu |S|^2.  $$
\end{prop}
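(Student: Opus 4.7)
The plan rests on a two-step structure: a symmetry reduction, followed by an explicit computation of the remaining scalar.

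By the rotational invariance of the radial two-point correlation $r$, the underlying point process is isotropic in law, so the quadratic form $S \mapsto \mu_2 S : S$ on $\text{Sym}_{3,\sigma}(\mathbb{R})$ is $SO(3)$-invariant: $\mu_2 (RSR^T): RSR^T = \mu_2 S : S$ for every $R \in SO(3)$. Since $\text{Sym}_{3,\sigma}(\mathbb{R})$ is an irreducible $SO(3)$-representation (the spin-$2$ representation), Schur's lemma gives $\mu_2 S : S = c|S|^2$ for some constant $c$, and it remains to show $c = \frac{5}{2}\mu$.

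To identify $c$, I would start from Proposition \ref{old_prop_ergodic} and perform the change of variable $u = z - z'$. The tent volume factor $\mathrm{vol}(z(u))/L^3$ converges to $1$ on compact sets in $u$. Since $r \equiv 0$ near the origin and $r \equiv m^2$ for large $|u|$, the function $r - m^2$ is supported in a bounded annulus, while the periodic Green function $G_{S,L}$ converges to the free-space $G_S$ uniformly on that annulus as $L \to \infty$. Using the Calderon-Zygmund mean-zero property $\int_{S^2} g_S \, d\sigma = 0$ of the kernel $g_S(u) = \mathcal{M}(u) S : S$ to discard the constant tail $r \equiv m^2$, the formula reduces to
\begin{equation*}
\mu_2 S : S = \frac{25\mu}{2 m^2} \int_{\R^3} (S\nabla) \cdot G_S(u)\, r(|u|)\, du.
\end{equation*}
An integration by parts and passage to spherical coordinates $u = \rho \omega$ then give
\begin{equation*}
\int (S\nabla)\cdot G_S(u)\, r(|u|)\, du = -\int_0^\infty r'(\rho)\, \rho^2\, d\rho \int_{S^2} G_S(\rho\omega)\cdot S\omega\, d\sigma(\omega).
\end{equation*}

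Using the explicit form $G_S(\rho\omega) = -\frac{3}{8\pi\rho^2}(\omega \cdot S\omega)\omega$, together with the standard identity $\int_{S^2}\omega_i\omega_j\omega_k\omega_l\, d\sigma = \frac{4\pi}{15}(\delta_{ij}\delta_{kl}+\delta_{ik}\delta_{jl}+\delta_{il}\delta_{jk})$ and $\mathrm{tr}\, S = 0$, one computes $\int_{S^2}(\omega\cdot S\omega)^2\, d\sigma = \frac{8\pi|S|^2}{15}$, hence $\int_{S^2}G_S(\rho\omega)\cdot S\omega\, d\sigma = -\frac{|S|^2}{5\rho^2}$. The radial factor telescopes: $\int_0^\infty r'(\rho)\, d\rho = r(\infty) - r(0) = m^2$, yielding $\int (S\nabla)\cdot G_S(u)\, r(|u|)\, du = \frac{m^2|S|^2}{5}$ and therefore $\mu_2 S : S = \frac{5}{2}\mu |S|^2$, as claimed.

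The main obstacle is the rigorous justification of the reduction to the free-space integral: one must control the contribution of the singularities of $G_{S,L}$ at the nonzero lattice points $L\mathbb{Z}^3 \setminus \{0\}$ (where $r \equiv m^2$), so that only the behaviour of $G_S$ near the origin survives in the limit. This relies both on the compact support of $r - m^2$ and on the Calderon-Zygmund cancellation at infinity, together with the uniform convergence of $G_{S,L} \to G_S$ on compact sets avoiding the nonzero lattice points.
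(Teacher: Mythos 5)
The quadratic-form symmetry argument via Schur's lemma is a nice observation (though strictly speaking the hypotheses only constrain the two-point correlation, not the full law of the process; what saves it is that Proposition~\ref{old_prop_ergodic} expresses $\mu_2 S:S$ through $r$ alone). The real issue is in the identification of the constant $c$, where there is a genuine gap.

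You claim that after using the cancellation $\int_{S^2} g_S\,d\sigma = 0$ to discard the tail $r \equiv m^2$, the limit in Proposition~\ref{old_prop_ergodic} reduces to the free-space principal-value integral
$\int_{\R^3} (S\nabla)\cdot G_S(u)\, r(|u|)\,du$,
and that this integral equals $\frac{m^2|S|^2}{5}$. Both parts of this are wrong, and the errors cancel, which is why you nonetheless land on the correct final value. First, the principal-value integral is in fact \emph{zero}. To see this, note that the integrand is absolutely integrable on any $B(0,R_0)\setminus B(0,\eps_0)$ because $r$ vanishes near $0$, and that $\int_{R_0<|u|<R}(S\nabla)\cdot G_S\,m^2\,du = 0$ for all $R$ by the zero spherical mean. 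One can then integrate by parts over $B(0,R_0)$: the boundary term on $\partial B(0,R_0)$ equals $m^2\int_{\partial B(0,R_0)}(Sn)\cdot G_S\,d\sigma = -\frac{m^2|S|^2}{5}$ (scale invariant, since $G_S$ is homogeneous of degree $-2$), and the bulk term equals $-\int_0^{R_0} r'(\rho)\,\rho^2\,d\rho\int_{S^2}G_S(\rho\omega)\cdot S\omega\,d\sigma = \frac{m^2|S|^2}{5}$, so the total is $0$. Your second step, the spherical-coordinate integration by parts, drops precisely this boundary term at $|u|\to\infty$; it is not negligible, because $G_S\sim R^{-2}$ while the sphere area grows like $R^2$, so it contributes the constant $-\frac{m^2|S|^2}{5}$.

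What actually makes the limit nonzero is the \emph{periodic} regularization implicit in $G_{S,L}$, which is not interchangeable with the spherical (principal-value) regularization. In the paper's argument, the integration by parts in the $z$-variable is performed over $K_{0,1}\cap B(z',M/L)^c$; the outer boundary term on $\partial K_{0,1}$ vanishes because $(S\nabla)\cdot G_{S,1}$ is $\Z^3$-periodic, and the only surviving contribution is the inner boundary $\partial B(z',M/L)$, which yields $-\int_{\partial B(0,1)}(S\nu)\cdot G_S\,m^2 = \frac{m^2|S|^2}{5}$. The point that must not be elided is that a mean-zero Calderon--Zygmund kernel has a regularization-dependent average: the naive principal value gives $0$, but the periodic limit appearing in Proposition~\ref{old_prop_ergodic} gives $\frac{m^2|S|^2}{5}$. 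Your proposed reduction collapses this distinction, and the final numerical agreement is coincidental rather than a proof.
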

The fact that $r$ is radial expresses an isotropy of the point process. The fact that it is constant and equal to $m^2$ at infinity  corresponds to a natural decorrelation at large distances. Note that all assumptions of the proposition are satisfied by the usual hardcore Poisson processes. One could actually relax the hypotheses on $r$, just assuming fast enough convergence at infinity. 

\begin{proof}
For any $L> 2M \ge 0$,  we denote
$K_{M,L} := (M, L-M)^3$.  By Proposition \ref{old_prop_ergodic}, the goal is to prove that for some $M > 0$, 
$$ \lim_{L \rightarrow +\infty} \frac{1}{L^3} \int_{K_{M,L} \times K_{M,L}} (S\na) \cdot G_{S,L}(z-z') \, r(z-z') dz dz'  = \frac{m^2}{5} |S|^2. $$
 By a simple scaling argument,  $G_{S,L}(x) = L^{-2} G_{S,1}\left( \frac{x}{L}\right)$. It follows easily that 
$$  \frac{1}{L^3} \int_{K_{M,L} \times K_{M,L}} (S\na) \cdot G_{S,L}(z-z') \, r(z-z') dz dz'  =   \int_{K_{M/L,1} \times K_{M/L,1}} (S\na) \cdot G_{S,1}(z-z') \, r(L(z-z')) dz dz'. $$
 For $z,z' \in K_{M/L,1}$, $\: z-z' \in \big(-1 + 2M/L, 1-2M/L\big)^3$.  Over this set, the periodic function $G_{S,1}$ can be decomposed as 
 $$ G_{S,1} = \sum_{k \in \Z^3, |k_i| \le 1} G_S(\cdot - k) + \tilde{G}_{S,1} $$
 where $\tilde{G}_{S,1}$ is smooth. From this decomposition, and the fact that $r$ vanishes near the origin, it follows that 
 \begin{equation} \label{bound_int_GS1}
  \int_{\big(-1 + 2M/L, 1-2M/L\big)^3} \big|(S\na) \cdot G_{S,1}(u)  \,  r(Lu) \big| du \le C_M \ln L. 
  \end{equation}
   It follows in particular that 
\begin{align*} 
& \int_{(K_{M/L,1}\setminus K_{2M/L,1})  \times K_{M/L,1}} (S\na) \cdot G_{S,1}(z-z') \, r(L(z-z')) dz dz' \\ 
&\le \big| K_{M/L,1}\setminus K_{2M/L,1}  \big| \, C_M \ln L \:  \le \:  C \frac{\ln L}{L } \: \rightarrow 0, \quad L \rightarrow +\infty 
\end{align*}
It remains to show that 
$$ \lim_{L \rightarrow +\infty}  \int_{K_{2M/L,1}  \times K_{M/L,1}} (S\na) \cdot G_{S,1}(z-z') \, r(L(z-z')) dz dz'  = \frac{m^2}{5} |S|^2. $$
We take $M$ large enough so that $r(x) = m^2$ for $|x| \ge M$. We decompose 
\begin{align*}
& \int_{K_{2M/L,1}  \times K_{M/L,1}} (S\na) \cdot G_{S,1}(z-z') \, r(L(z-z')) dz dz'   \\
= &   \int_{K_{2M/L,1}}  \int_{K_{M/L,1} \cap B(z', M/L)^c} (S\na) \cdot G_{S,1}(z-z') \, m^2 dz dz' \\
+ &   \int_{K_{2M/L,1}}  \int_{B(z', M/L)} (S\na) \cdot G_{S,1}(z-z') \,   r(L(z-z')) dz dz'  =: I_L \: + \: J_L. 
\end{align*}
We have used that for any $z' \in K_{2M/L,1}$, $K_{M/L,1} \cap B(z', M/L)  =  B(z', M/L)$.  
 As regards the first term, we use again \eqref{bound_int_GS1}, which yields 
$$   \int_{K_{2M/L,1}}   \int_{K_{0,1}\setminus K_{M/L,1}}  |(S\na) \cdot G_{S,1}(z-z') \, m^2|  dz dz' \le C \frac{\ln L}{L} $$
so that 
$$ I_L =    \int_{K_{2M/L,1}}   \int_{K_{0,1} \cap B(z', M/L)^c}   (S\na) \cdot G_{S,1}(z-z') \,  m^2 dz dz'  \: + \: o(1). $$
We then perform an integration by parts in variable $z$. Note that for all $z' \in K_{2M/L,1}$, the boundary of the ball $B(z', M/L)$ is disjoint from the boundary of $K_{0,1}$. Taking into account that $(S\na) \cdot G_{S,1}(z-z')$ is $\Z^d$-periodic, the boundary term at $\pa K_{0,1}$ vanishes, and eventually 
\begin{align*} 
I_L  \: & =  -  \int_{K_{2M/L,1}}  \int_{\pa B(z', M/L)} (S\nu) \cdot  G_{S,1}(z-z')  d\sigma(z) dz' m^2  + o(1)  \\
& =  -  \int_{K_{2M/L,1}} \int_{\pa B(z', M/L)} (S\nu) \cdot  G_{S}(z-z')  d\sigma(z) dz' m^2  + o(1) \\
& = - (1- \frac{4M}{L})^3  \Bigl( \int_{\pa B(0,1)}  (S\nu) \cdot   G_{S} \Bigr)  \, m^2 + o(1) 
\end{align*}
Using the explicit expression for $G_S$, we find 
$$ I_L \: \rightarrow \frac{3}{8\pi} \int_{\pa B_1} (S n \cdot n)^2  m^2= \frac{m^2}{5} |S|^2 $$
thanks to the identity $\int_{\pa B_1} n_i \, n_j \, n_k \, n_l = \frac{4\pi}{15} \left( \pa_{ij} \pa_{kl} + \pa_{ik} \pa_{jl} + \pa_{il} \pa_{jk} \right)$.
Eventually,  for the last term, we find  that 
$$ J_L =  \int_{K_{2M/L,1}}  \int_{B(z', M/L)} (S\na) \cdot G_{S}(z-z') \,   r(L(z-z')) dz dz'  + o(1) $$
As $r$ is radial and as the mean of $(S\na) \cdot G_{S}$ over spheres is zero, the integral in $z$ at the right-hand side vanishes identically. This concludes the proof. 
\end{proof}

\section*{Acknowledgements}{The authors acknowledge the support of SingFlows project, grant ANR-18- CE40-0027 of the French National Research Agency (ANR). D. G-V  acknowledges the support of the Institut Universitaire de France. A. M acknowledges the funding from the European Research Council (ERC) under the European Union’s Horizon 2020 research and innovation program Grant agreement No 637653, project BLOC “Mathematical Study of Boundary Layers in Oceanic Motion”.}

\end{document}